\documentclass[english, 12pt]{amsart}
\usepackage{amssymb}
\usepackage{amsfonts}
\usepackage{amscd}
\usepackage[T1]{fontenc}
\usepackage{color}


%
%
%


\usepackage{mathrsfs} 

\newcommand{\cCexp}{\cC^{\mathrm{exp}}}
\newcommand{\cCe}{\cC^{\mathrm{e}}}

\newcommand{\Loc}{\mathrm{Loc}}

\def\VF{\mathrm{VF}}

\newcommand{\RF}{{\rm RF}}

\newcommand{\bigdcup}{\mathop{\dot{\bigcup}}}

\def\ac{{\overline{\rm ac}}}

\def\11{{\mathbf 1}}

\def\CC{{\mathbb C}}

\def\FF{{\mathbb F}}

\def\NN{{\mathbb N}}

\def\RR{{\mathbb R}}

\def\ZZ{{\mathbb Z}}

\def\cC{{\mathscr C}}
\def\cD{{\mathcal D}}

\def\cM{{\mathcal M}}

\def\cO{{\mathcal O}}

\def\llp{\mathopen{(\!(}}

\def\rrp{\mathopen{)\!)}}

\newcommand{\C}{{\mathbb C}}

\newcommand{\Z}{{\mathbb Z}}
\newcommand{\Q}{{\mathbb Q}}

\newcommand{\F}{{\mathbb F}}

\newcommand{\rf}{k}
\newcommand{\ri}{\Omega}






\newcommand\ldpo[1][\ri]{{\mathcal L}_{#1}}









\newcommand{\tr}{\operatorname{Tr}}

\def\llp{\mathopen{(\!(}}

\def\rrp{\mathopen{)\!)}}

\newtheorem{thm}[subsection]{Theorem}
\newtheorem{lem}[subsection]{Lemma}
\newtheorem{cor}[subsection]{Corollary}
\newtheorem{prop}[subsection]{Proposition}

\theoremstyle{definition}
\newtheorem{defn}[subsection]{Definition}

\newtheorem{def-prop}[subsection]{Proposition-Definition}
\newtheorem{def-theorem}[subsection]{Theorem-Definition}
\newtheorem{def-lem}[subsection]{Lemma-Definition}

\theoremstyle{remark}

\newtheorem{rem}[subsection]{Remark}

\theoremstyle{plain}

\numberwithin{equation}{subsection}

  {\par\medskip\noindent #1\par\begingroup%
    \advance\leftskip by 1em\advance\rightskip by 1em}%
  {\par\endgroup}

\newcommand{\ord}{\operatorname{ord}}




\begin{document}

\setcounter{tocdepth}{1} 

\author{Raf Cluckers}
\address{Universit\'e Lille 1, Laboratoire Painlev\'e, CNRS - UMR 8524, Cit\'e Scientifique, 59655
Villeneuve d'Ascq Cedex, France, and,
KU Leuven, Department of Mathematics,
Celestijnenlaan 200B, B-3001 Leu\-ven, Bel\-gium}
\email{Raf.Cluckers@math.univ-lille1.fr}
\urladdr{http://math.univ-lille1.fr/$\sim$cluckers}

\author{Julia Gordon}
\address{Department of Mathematics, University of British Columbia,
Vancouver BC V6T 1Z2 Canada}
\email{gor@math.ubc.ca}
\urladdr{http://www.math.ubc.ca/$\sim$gor}

\author{Immanuel Halupczok}
\address{School of Mathematics, University of Leeds, Leeds, LS2 9JT -- UK}
\email{math@karimmi.de}
\urladdr{http://www.immi.karimmi.de/en.math.html}

\thanks{The author R.C. was supported by the European Research Council under the European Community's Seventh Framework Programme (FP7/2007-2013) with ERC Grant Agreement nr. 615722
MOTMELSUM, by the Labex CEMPI  (ANR-11-LABX-0007-01), and  would like to thank both the Forschungsinstitut f\"ur Mathematik (FIM) at ETH Z\"urich and the IH\'ES for the hospitality during part of the writing of this paper. J.G. was supported by NSERC, and I.H. was supported by the SFB~878 of the Deutsche Forschungsgemeinschaft.}

\subjclass[2000]{Primary 14E18; Secondary 22E50, 40J99}

\keywords{Transfer principles for motivic integrals, motivic integration,
orbital integrals, motivic exponential functions}

\title[Transfer principles for Bounds]{Transfer principles for Bounds of motivic exponential functions}

\begin{abstract}
We study transfer principles for upper bounds of motivic exponential functions and for linear combinations of such functions, directly generalizing the transfer principles from \cite{CLexp} and
\cite[Appendix B]{ShinTemp}. These functions come from rather general oscillatory integrals on local fields, and can be used to describe e.g. {Fourier transforms of} orbital integrals. One of our techniques consists in reducing to
simpler functions where the oscillation only comes from the residue field.
\end{abstract}

\maketitle

\section{Introduction}

After recalling concrete motivic exponential functions
and their stability under taking integral transformations, we study transfer principles for bounds of motivic exponential functions and {their} linear combinations.
In this context, \emph{transfer} means switching between local fields with isomorphic residue field (in particular between
positive and mixed characteristic).
By the word \emph{concrete} (in the first sentence), we mean that we work
uniformly in all local fields of large enough residue field characteristic, as opposed to genuinely motivic as done in \cite{CLexp}; this setting is perfectly suited for transfer principles, which are, {indeed,} about local fields.

Our results relate to previously known transfer principles (from \cite{CLexp}, \cite{CGH}, and \cite[Appendix B]{ShinTemp}) as follows.
The principle given by Theorem \ref{thm:transfer-gen} below,
which allows to transfer bounds on motivic exponential functions,
generalizes both the transfer principle of \cite[Proposition 9.2.1]{CLexp},
where, one can say, the upper bound was identically zero, and the transfer principle of \cite[Theorem B.7]{ShinTemp}, where the case without oscillation is treated. {A generalization to $\cCexp$ (instead of $\cC$) of Theorem B.6}
{of} \cite{ShinTemp}({which contains a statement about uniformity across all completions of a given number field rather than a transfer principle)}  is left to future work, since it requires {different, and deeper,}
proof techniques.

The results in this paper are independent of the transfer principles of \cite{CGH} about {e.g., loci of integrability,}
and in fact, our proofs are closer to the ones of \cite{CLexp}, and can avoid the
heavier machinery from \cite{CGH}.

After Theorem \ref{thm:transfer-gen}, we give some further generalizations which treat $\CC$-linear combinations of motivic exponential functions, uniformly in the complex scalars. Specifically, we obtain transfer principles for linear (in-)dependence and for upper bounds of linear combinations of motivic exponential functions (or rather, their specializations for any local field $F$ with large residue field characteristic), see Theorem \ref{thm:transfer-str},  Proposition  \ref{cor:transfer:indep:basic} and Corollary \ref{cor:transfer:indep}.

A key proof technique that we share with \cite{CLexp} consists in reducing from general motivic exponential functions to
simpler functions where the oscillation only comes from additive characters on the residue field. We recall these classes of functions {with their respective oscillatory behavior} in Section \ref{sec:motfun}.

Let us finally mention that the transfer principles of \cite{CLexp} have been applied in \cite{CHL} and \cite[Appendix]{YGordon} to obtain the Fundamental Lemma of the Langlands program in characteristic zero (see also \cite{Nadler}), and the ones of \cite{CGH} have been used in \cite{CGH2} to show local integrability of Harish-Chandra characters in large positive characteristic.
The results of this paper may apply to a wide class of $p$-adic integrals, e.g. orbital integrals and their Fourier transforms.
We will leave the study of such applications to future work.

\subsection*{Acknowledgments}
\hspace{0.5cm}
The authors
{are grateful to} T.~Hales, F.~Loeser and J.~Denef whose influence on the subject of motivic integration is and has always been very important.
Special thanks to T.~Hales for suggesting to look at a statement like {Proposition}~\ref{cor:transfer:indep:basic}.

\section{Motivic exponential functions}\label{sec:motfun}\label{subsub:functions}

In a nutshell, motivic functions are a natural class of functions from {(subsets of)} valued fields to $\CC$,
built from functions on the valued fields that are definable in the Denef-Pas language; the class is
closed under integration. Motivic \emph{exponential} functions are a bigger such class, incorporating additive characters of the valued field.
These functions were introduced in \cite{CLexp}, and the strongest form of stability under integration for these functions was proved in \cite{CGH}. (Constructible functions without oscillation and on a fixed $p$-adic field were introduced earlier by Denef in \cite{Denef1}.)
{We start with recalling} three classes of functions, $\cC$, $\cCe$, and
$\cCexp$, {which have, so to speak, increasing}
oscillatory richness, and each one is stable under integration, see Theorem \ref{thm:mot.int.}.

\subsection*{Motivic  functions}

We recall some terminology of \cite{CLoes} and \cite{CLexp}, with the same focus as in \cite{CGH3} (namely uniform in the local field, {as opposed to an approach with Grothendieck rings}).

{Fix a ring of integers $\ri$ of a number field, as base ring.}

\begin{defn}\label{AO}
Let $\Loc_\ri$ be the collection of all triples $(F, \iota , \varpi)$, where $F$ is a non-Archimedean local
field which allows at least one ring homomorphism from $\ri$ to $F$,  the map $\iota:\ri\to F$ is such a ring homomorphism, and $\varpi$ is a uniformizer for the valuation ring of $F$.
Here, by a non-Archimedean local field we mean a finite extension of $\Q_p$ or $\F_p\llp t\rrp$ {for any prime $p$}.

Given an integer $M$, let $\Loc_{\ri, M}$ be the collection of $(F,{\iota},\varpi)$ in $\Loc_\ri$ such that the residue field of $F$ has characteristic at least $M$.
\end{defn}

For a non-Archimedean local field $F$, write $\cO_F$ for its valuation ring with maximal ideal $\cM_F$ and residue field $k_F$ with $q_F$ elements.

We will use the Denef-Pas language with coefficients from $\ri[[t]]$ for {our} fixed ring of integers $\ri$. We denote this language by $\ldpo$.

\begin{defn}\label{DP}
The language $\ldpo$ has three sorts, $\VF$ for the valued field, $\RF$ for the residue field, and a sort for the value group
which we simply call $\ZZ$, since we will only consider structures where it is actually equal to $\ZZ$.
On $\VF$, one has the ring language and coefficients from the ring $\ri[[t]]$. On $\RF$, one has the ring language. On $\ZZ$, one has the Presburger language, namely the language of ordered abelian groups together with constant symbols $0$, $1$, and symbols $\equiv_n$ for each $n>0$ for the congruence relation modulo $n$. Finally, one has the symbols $\ord$ for the valuation map from the valued field minus $0$ to $\ZZ$, and $\ac$ for an angular component map from the valued field to the residue field.
\end{defn}

{It was an important insight of Denef that one has elimination of valued field quantifiers for first order formulas in this language $\ldpo$, and this was worked out by his student Pas in \cite{Pas}. Indeed, quantifier elimination is a first step to understanding the geometry of the definable sets and functions. Another geometrical key result and insight by Denef (\cite{Denef2}, \cite{Pas}, \cite{CLoes}) is the so-called cell decomposition, which is behind Proposition \ref{lem:presburger-fam}.}

The language $\ldpo$ is interpreted in any $(F,\iota,\varpi)$ in $\Loc_\ri$ in the obvious way, {where $t$ is interpreted as $\varpi$ and} where $\ac$ is defined by
$$
\ac(u\varpi^\ell) = \bar u \mbox{ and } \ac(0)=0
$$
for any $u\in \cO_F^\times$ and $\ell\in\ZZ$, $\bar u$ being reduction modulo $\cM_F$.
We will abuse notation by notationally identifying $F$ and $(F,\iota,\varpi)\in \Loc_\ri$.

Any $\ldpo$-formula $\varphi$ gives a subset $\varphi(F)$  of $F^n\times k_F^m\times \ZZ^r$ for $F\in \Loc_\ri$ for some $n,m,r$ only depending on $\varphi$, by taking the  $F$-rational points on $\varphi$ in the sense of model theory (see Section 2.1 of \cite{CGH3} for more explanation). This leads us to the following handy definition.

\begin{defn}\label{defset}
A collection
$X = (X_F)_{F \in \Loc_{\ri,M}}$ of subsets $X_F\subset F^n\times \rf_F^m\times \Z^r$ for some $M,n,m,r$ is called a \emph{definable set} if there is an
$\ldpo$-formula $\varphi$ such that $X_F = \varphi(F)$ for each
$F$ in $\Loc_{\ri,M}$ (see Remark \ref{rem:largep}).
\end{defn}

By Definition \ref{defset}, a ``definable set'' is actually a collection of sets indexed by $F\in \Loc_{\ri,M}$; such practice is often used in model theory and also in algebraic geometry.
A particularly simple definable set is $(F^n\times k_F^m\times \ZZ^r)_F$, for which we use
the simplified notation
$\VF^n\times \RF^m\times \ZZ^r$.
We apply the typical set-theoretical notation to definable sets $X, Y$, e.g.,
$X \subset Y$ (if $X_F \subset Y_F$ for each $F \in \Loc_{\ri,M}$ for some $M$), $X \times Y$, and so on, {which may increase $M$ if necessary}.

\begin{defn}\label{deffunct}
For definable sets $X$ and $Y$, a collection $f = (f_F)_F$ of functions $f_F:X_F\to Y_F$ for $F\in\Loc_{\ri,M}$ for some $M$ is called a definable function and denoted by $f:X\to Y$ if
the collection of graphs of the $f_F$ is a definable set.
\end{defn}

\begin{rem}\label{rem:largep}
For a definable set $X$ as in Definition \ref{defset}, we are usually only interested in $(X_F)_{F \in \Loc_{\ri,M}}$ for $M$ sufficiently big, and thus, we often allow ourselves to replace $M$ by a larger number if necessary, {without saying so explicitly; also} the uniform objects defined below in Definitions \ref{motfun}, \ref{expfun}, and so on, are only interesting for $M$ sufficiently large.
In model theoretic terms, we are {using} the theory of {all} non-archimedean local fields, together {with, for each $M>0$,  an axiom stating that the residue characteristic is at least $M$.}
Note however that a more general theory of uniform integration which works uniformly in all local fields of mixed characteristic (but not in local fields of small positive characteristic), is under development in \cite{CHallp} {and will generalize \cite{CLbounded}.}
\end{rem}

For motivic  functions, definable functions are the building blocks, as follows.

\begin{defn}\label{motfun}
Let $X = (X_F)_{{F\in \Loc_{\ri,M}}}$ be a definable set.
A collection $H = (H_F)_F$ of functions $H_F:X_F\to\RR$ is called \emph{a motivic function} on $X$ if
there exist integers
$N$, $N'$, and $N''$, nonzero integers $a_{i\ell}$, definable functions $\alpha_{i}:X\to \ZZ$ and $\beta_{ij}:X\to \ZZ$,
and definable sets  $Y_i\subset X\times \RF^{r_i}$ such that for all $F\in \Loc_{\ri, {M}}$ and all $x\in X_F$
$$
H_F(x)=\sum_{i=1}^N    \# Y_{i,F,x} \cdot  q_F^{\alpha_{iF}(x)} \cdot \big( \prod_{j=1}^{N'} \beta_{ijF}(x) \big) \cdot \big( \prod_{\ell=1}^{N''} \frac{1}{1-q_F^{a_{i\ell}}} \big),
$$
where $Y_{i,F,x}$ is the finite set $\{y\in k_F^{r_i}\mid (x,y)\in Y_{i,F}\}$.

We write $\cC(X)$ to denote the ring of motivic functions on $X$.
\end{defn}

The precise form of this definition is motivated by
the property that motivic functions behave well under integration (see Theorem \ref{thm:mot.int.}).

\subsection*{Motivic exponential functions}\label{subsub:expfunctions}

For any local field $F$, let $\cD_F$ be the set of the additive characters $\psi$ on $F$ that are trivial on the maximal ideal $\cM_F$ of $\cO_F$,
nontrivial on $\cO_F$, and such that, for $x\in \cO_F$, one has
\begin{equation}\label{psiu}
\psi(x) =  {\mathbf{e}}(\tr_{k_F/\FF_{p}} (\bar x))
\end{equation}
with $\bar x$ the reduction of $x$ modulo $\cM_F$ and where $q_F$ is an integer power of the prime number $p$, and where $\mathbf{e}:\FF_p\to \CC$ sends $a\in \{0,\ldots,p-1\}$ to
$\exp(\frac{ 2\pi i a}{p})$ for some fixed complex square root $i$ of $-1$.
Expressions involving additive characters of $p$-adic fields often give rise to exponential sums, and this explains the term ``exponential'' in the definition below.

\begin{defn}\label{expfun}
Let $X  = (X_F)_{ {F\in \Loc_{\ri,M}}}$ be a definable set.
A collection $H = (H_{F,\psi})_{F,\psi}$ of functions $H_{F,\psi}:X_F\to\CC$ for $F\in \Loc_{\ri,M}$ and $\psi\in \cD_F$ is called \emph{a motivic exponential function} on $X$ if
there exist integers $N>0$ and $r_i\geq 0$, motivic functions $H_i=(H_{iF})_F$ on $X$, definable sets $Y_i\subset X\times {\RF}^{r_i}$ and definable functions $g_i:Y_i\to \VF$ and $e_i:{Y_i}\to \RF$ for $i=1,\ldots,N$, such that for all $F\in \Loc_{\ri,M}$, all $\psi\in \cD_F$ and all $x\in X_F$
\begin{equation}\label{fexp}
H_{F,\psi}(x)=\sum_{i=1}^N   H_{iF}(x)\Big( \sum_{y \in Y_{i,F,x}}\psi\big( g_{iF}(x,y)  { + e_{iF}(x,y) }     \big)\Big),
\end{equation}
{where $\psi(a+v)$ for $a\in F $ and $v\in k_F$, by abuse of notation,  is defined as $\psi ( a + u)$, with $u$ any unit in $\cO_F$ such that $\bar u=v$, which is well defined by (\ref{psiu}).}
We write $\cCexp(X)$ to denote the ring of motivic exponential functions on $X$.
Define the subring $\cCe(X)$ of $\cCexp(X)$ consisting of those functions $H$ as in (\ref{fexp}) such that all $ {g_{iF}}$ are identically vanishing. Note that for $H \in \cCe(X)$, $H_{F,\psi}$ does not depend on  {$\psi\in \cD_F$ because of (\ref{psiu})}, so we will just write $H_F$ instead.
\end{defn}

Compared to Definition \ref{motfun}, the counting operation $\#$ has been replaced by taking exponential sums, which makes the motivic exponential functions a richer class than the motivic functions.
Indeed, note that the sum as above gives just $\#(Y_{iF})_x$ in the case that  {$g_{iF}=0$ and $e_{iF}=0$}.

\subsection*{Integration}

To integrate a motivic function $f$ on a definable set $X$, we need a uniformly given family of measures on each $X_F$. For $X = \VF$, we put the Haar measure on $X_F = F$ so that $\cO_F$ has measure $1$; on
$k_F$ and on $\ZZ$, we use the counting measure and for $X\subset \VF^n\times \RF^m\times \ZZ^r$ we use the  {measure on $X_F$ induced by the product measure on $F^n\times k_F^m\times \ZZ^r$}. To obtain other  {motivic} measures on
definable sets $X$, one can also use measures associated to ``definable volume forms'', see Section 2.5 of \cite{CGH3},  {\cite[\S 8]{CLoes}}, and Section 12 of \cite{CLbounded}.

Maybe the most important aspect of these motivic functions is that they have nice and natural properties related to integration, see e.g.~the following theorem about stability, which  {generalizes} Theorem 9.1.4 of
\cite{CLexp} (see also Theorem 4.1.1 of \cite{CLexp}).

\begin{thm}[{\cite[Theorem 4.3.1]{CGH}}]\label{thm:mot.int.}
Let $f$ be in $\cC(X\times Y)$, resp. in $\cCe(X\times Y)$ or  {in} $\cCexp(X\times Y)$, for some definable sets $X$ and $Y$, with $Y$ equipped with a motivic measure  {$\mu_Y$}. Then there exist a  function  ${I}$ in $\cC(X)$, resp.~in $\cCe(X)$ or $\cCexp(X)$ and an integer $M>0$ such that for each $F\in \Loc_{\ri,M}$, each $\psi\in \cD_F$
and for each $x\in X_F$ one has
$$
{I}_F(x) = \int_{y\in Y_F}f_F(x,y)\,d\mu_{Y_F}, \mbox{ resp. } I_{F,\psi}(x) = \int_{y\in Y_F}f_{F,\psi}(x,y)\,d\mu_{Y_F},
$$
whenever the function $Y_F\to\CC:y\mapsto f_F(x,y)$, resp.~$y\mapsto f_{F,\psi}(x,y)$, is in $L^1$.
\end{thm}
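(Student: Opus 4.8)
The plan is to reduce the integration over the definable set $Y$ to iterated integration over a single coordinate of a single sort at a time, and then to dispatch the three elementary cases that result. Using quantifier elimination for $\ldpo$ one may arrange that $Y\subset X\times\VF^n\times\RF^m\times\ZZ^r$, and, since the measure on $Y_F$ is induced by the product measure on the ambient space, Fubini lets us peel off the last coordinate and induct on $(n,m,r)$. So everything comes down to the following three assertions, for each of the classes $\cC$, $\cCe$, $\cCexp$: (i) if $f$ lies in the class on $X\times\RF$, then $x\mapsto\sum_{\xi\in\RF}f_F(x,\xi)$ lies in the class on $X$; (ii) the analogous statement for $X\times\ZZ$, summing a series which is assumed $L^1$; (iii) the analogous statement for $X\times\VF$ with the Haar measure. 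In each step one must check not only that the resulting function is given by a uniform formula, but that it has precisely the shape prescribed in Definitions \ref{motfun} and \ref{expfun}.

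Step (i) is essentially bookkeeping: a definable family of finite subsets of some $\RF^{r_i}$ has, after summation, cardinalities that are again finite-set-cardinalities of definable sets, matching the factors $\#Y_{i,F,x}$ in Definition \ref{motfun}; an additive character evaluated on a residue-field argument is absorbed by enlarging the $Y_i$ and adjusting the $e_{iF}$, and crucially no new valued-field argument $g_{iF}$ is introduced, so the subclasses are preserved. Step (ii) uses Presburger cell decomposition (the result behind Proposition \ref{lem:presburger-fam}): after decomposing, the set $Y_F\subset X_F\times\ZZ$ and the exponents $\alpha_{iF}$ and affine functions $\beta_{ijF}$ become piecewise affine in the $\ZZ$-variable, and the sum splits into finitely many series of the form $\sum_{k\ge k_0(x)}P(k)\,q_F^{ak+b(x)}$ with $P$ a polynomial; when such a series converges — which the $L^1$ hypothesis guarantees exactly in the range where it does, the divergent ranges being simply discarded — it evaluates to a $\ZZ$-linear combination of terms $q_F^{ak_0(x)+b(x)}\,k_0(x)^{j}\prod_\ell(1-q_F^{a_\ell})^{-1}$, which is again of the allowed form. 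In this step the oscillatory factors ride along unchanged, since the valued-field argument being fed to $\psi$ does not involve the $\ZZ$-variable one is summing over.

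The heart of the matter is step (iii). One applies Denef--Pas cell decomposition to partition $X\times\VF$, compatibly with all the data defining $f$, into finitely many cells, i.e.\ sets described by conditions on $\ord(t-c(x))$ (bounded below and/or above by $\ord a(x)$, $\ord b(x)$) together with a condition $\ac(t-c(x))=\xi(x)$, for definable $a,b,c,\xi$; on each cell one substitutes $t\mapsto t-c(x)$ (Jacobian $1$) and reparametrises by the pair $(\ord t,\ac t)\in\ZZ\times\RF$, whose fibres are balls on which, in the non-exponential cases, the integrand is constant. This rewrites $\int_{\VF}f_F(x,t)\,dt$ as $q_F^{-1}$ times a sum over $\RF$ of an integral over $\ZZ$ of a function in the same class, reducing (iii) to (i) and (ii). In the $\cCexp$ case the genuinely new phenomenon is the behaviour of $\psi(g_{iF}(x,t))$ on a cell: one refines the decomposition so that on each piece either $g_{iF}(x,t)$ is, up to an additive term that can be pushed into $e_{iF}$, a function of $\ord(t-c(x))$ and $\ac(t-c(x))$ alone — so the oscillation has been "moved to the residue field" and the previous analysis applies — or else $\psi(g_{iF}(x,\cdot))$ is a non-trivial character on the relevant ball and equidistributes, forcing the contribution of that piece to the integral to vanish. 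Producing this refinement uniformly (by a single family of $\ldpo$-formulas), and verifying that the surviving contributions reassemble into a function of the exact form \eqref{fexp}, is the main obstacle; this is precisely the ``reduction to functions whose oscillation comes only from the residue field'' mentioned in the introduction, and it is where large residue characteristic enters, so that cell decomposition, the change of variables, and the equidistribution estimate all hold simultaneously and uniformly.

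Two final remarks close the argument. The function $I$ produced above is by construction a bona fide element of $\cC(X)$, resp.\ $\cCe(X)$ or $\cCexp(X)$, hence finite-valued everywhere on $X_F$; the $L^1$ hypothesis is invoked only to justify interchanging sum and integral and to discard the divergent geometric series in step (ii), and no claim is made off the $L^1$ locus. And because every construction — the several cell decompositions, the piecewise-affine descriptions, the refinement in step (iii) — is performed at the level of $\ldpo$-formulas, the integer $M$ can be fixed once and for all so that all of these are valid simultaneously for every $F\in\Loc_{\ri,M}$ and every $\psi\in\cD_F$.
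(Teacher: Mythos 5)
The paper does not give a proof of this theorem: it is imported by citation from \cite{CGH} (Theorems 4.3.1 and 4.4.3), together with a one-sentence remark that the $\cCexp$-argument there also yields the $\cCe$-case, or alternatively that $\cCe$ reduces to $\cC$ via residual parameterizations. Your sketch, by contrast, reconstructs from scratch the cell-decomposition/Fubini strategy that underlies those cited results, in the Denef--Pas--Cluckers--Loeser style: peel off one coordinate at a time, flatten the valued-field dependence using cell decomposition, and kill the oscillating pieces by orthogonality of characters. At that level of strategy you have the right picture, and it is indeed the strategy of the references; so it is a different route from this paper (which gives none) but a plausible outline of what the cited proof does.

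That said, your step (iii) as written is an assertion, not an argument, and you acknowledge it is the heart of the matter. The missing technical ingredient is a cell decomposition adapted to the particular functions $g_{iF}$ appearing in $\psi(g_{iF}(x,t))$: one needs to know that on each cell, $g_{iF}$ is, up to a residual additive perturbation (absorbed into $e_{iF}$), controlled in terms of $\ord(t-c(x))$ and $\ac(t-c(x))$ and a \emph{linear} leading term whose coefficient has definable order, and this comes from a Taylor/Jacobian analysis of $g_{iF}$ on balls, not from a mere cell decomposition of the domain. Only with that control does ``the restriction of $\psi\circ g_{iF}$ to the ball is a nontrivial character, hence the integral vanishes'' become a theorem rather than a hope; and this is exactly where large residue characteristic enters, to make the needed preparation theorems (e.g.\ Theorem 7.5.1 and the proof of Theorem 9.1.4 in \cite{CLexp}, and their analogues in \cite{CGH}) hold uniformly. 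That work is substantial and cannot be compressed into a sentence. Two smaller gaps: you tacitly assume the product measure on $Y$, whereas the statement also allows motivic measures coming from definable volume forms, which requires an additional change-of-variables reduction; and in verifying that the function $I$ is well defined everywhere while the displayed equality only holds on the $L^1$ locus, step (ii) needs a definable description of the convergence locus (so that the ``divergent series are discarded'' uniformly in $F$, $\psi$, $x$), not just a pointwise remark.
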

\begin{proof}
The cases $\cC$ and $\cCexp$ are treated in \cite{CGH}, Theorems 4.3.1 and 4.4.3.
The proof for $\cCexp$ in \cite{CGH} goes through also for $\cCe$. (A more direct and simpler proof for $\cCe$ can also be given, by reducing to the case for $\cC$ using residual parameterizations as in Definition 4.5.1 of \cite{CGH}.)
\end{proof}

\section{Transfer principles for bounds and linear combinations} 

In this section, we state the main results of this article.

The following statement allows one to transfer bounds which are known {for local fields of}
characteristic zero to local fields of positive characteristic, and vice versa.

\begin{thm}[Transfer principle for bounds]\label{thm:transfer-gen}
Let $X$ be a definable set, let  $H$ be in  $\cCexp(X)$ and let $G$ be in $\cCe(X)$.
Then there exist $M$ and $N$ such that, for any $F\in \Loc_{\ri, M}$,
the following holds.
If
\begin{equation}\label{transfer-gen}
|H_{F,\psi} (x)|_\C \le |G_{F}(x)|_\CC   \mbox{ for all } (\psi,x) \in \cD_F\times X_F
\end{equation}
then, for any local field $F'$ with the same residue field as $F$, one has
\begin{equation}\label{transfer-gen:eq}
|H_{{F'},\psi} (x)|_\C \le N \cdot |G_{F'}(x)|_\CC \mbox{ for all } (\psi,x) \in \cD_{F'}\times X_{F'}.
\end{equation}
Moreover, one can take $N=1$ if $H$ lies in $\cCe(X)$.
\end{thm}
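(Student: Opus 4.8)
The plan is to reduce the statement about bounds to the known transfer principle of \cite[Proposition 9.2.1]{CLexp} (i.e.\ the case where the bound is identically zero), by repackaging the inequality $|H_{F,\psi}(x)|_\CC \le |G_F(x)|_\CC$ as the vanishing of a suitable auxiliary function built from $H$ and $G$. The first step is to recall that $G\in\cCe(X)$ means $G_F(x)$ is a concrete motivic function with values that are, for each $F$ and $x$, rational numbers whose behavior is controlled by a finite combination of residue-field cardinalities, powers of $q_F$, Presburger data, and geometric series $1/(1-q_F^{a})$; in particular, after increasing $M$, the quantity $|G_F(x)|_\CC$ is again (the absolute value of) such a function, and its sign, as well as whether it is zero, is governed by a definable condition on $x$. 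Next I would treat separately the locus $Z\subset X$ where $G_F(x)=0$: on $Z$, the hypothesis forces $H_{F,\psi}(x)=0$ for all $\psi$, and one invokes the zero-transfer principle of \cite[Proposition 9.2.1]{CLexp} to conclude that $H_{F',\psi}(x)=0$ on $Z_{F'}$ as well, which is consistent with \eqref{transfer-gen:eq} since the right-hand side is then also $0$. So the real content is on $X\setminus Z$, where $G_F(x)\neq 0$.

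On $X\setminus Z$ the idea is to consider the function $\widetilde H := H/G$, which a priori is not in $\cCexp$ because division is not allowed; the standard fix is to clear denominators. Concretely, since $G\in\cCe$, on each definable piece of a suitable partition of $X\setminus Z$ one can write $|G_F(x)|_\CC$ as $q_F^{\gamma(x)}\cdot r(q_F)/s(q_F)$ with $\gamma$ definable and $r,s$ fixed integer polynomials with $s$ a product of cyclotomic-type factors $1-q_F^{a}$; multiplying through, the inequality $|H_{F,\psi}(x)|_\CC\le |G_F(x)|_\CC$ becomes an inequality between two concrete motivic (exponential) functions, and one can then apply the transfer principle for inequalities between such functions. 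Here I expect the cleanest route is to use that $\cCe$-functions are, up to the harmless factors $q_F^{\gamma}$ and $1/(1-q_F^a)$, built from counting residue-field points, so that the bound $|H_{F,\psi}(x)|_\CC \le |G_F(x)|_\CC$ can be rephrased — after multiplying by a large enough positive power of an auxiliary parameter and summing over the residue field — as the statement that a certain $\cCexp$-function is identically $\le 0$, equivalently that its positive part vanishes; and vanishing transfers by \cite[Proposition 9.2.1]{CLexp}. The constant $N$ appears because clearing the denominators $1-q_F^{a_{i\ell}}$ and the resulting polynomial coefficients introduces a $q_F$-independent (hence $F$-independent, for fixed residue field) multiplicative loss; tracking this bookkeeping gives an explicit $N$ depending only on $H$ and $G$ and not on $F'$.

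Finally, the improvement $N=1$ when $H\in\cCe(X)$ comes from the fact that then both sides of the inequality are $\psi$-independent concrete motivic functions of the same shape (combinations of residue-field counts, powers of $q_F$, and geometric series), so one does not need to clear genuine exponential-sum contributions: the comparison $|H_F(x)|_\CC\le|G_F(x)|_\CC$ is, on each piece of a finite definable partition, an honest inequality between two $\cCe$-functions, and such inequalities — being first-order expressible in terms of the residue field data via the structure theory behind Proposition \ref{lem:presburger-fam} — transfer verbatim, with no constant. The main obstacle I anticipate is the second step: making precise the passage from the inequality $|H_{F,\psi}(x)|_\CC\le|G_F(x)|_\CC$ to a \emph{vanishing} statement to which \cite[Proposition 9.2.1]{CLexp} literally applies, because $|{\cdot}|_\CC$ and ``$\le$'' are not operations internal to the ring $\cCexp(X)$; the trick will be to use a dummy residue-field (or value-group) parameter to encode ``$a\le b$'' as ``a certain nonnegative quantity parametrized by that dummy is zero for all values of the dummy,'' exactly in the spirit of how \cite{CLexp} and \cite[Appendix B]{ShinTemp} reduce positivity/boundedness to vanishing.
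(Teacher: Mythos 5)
Your proposal takes a genuinely different route from the paper, and the key step you flag as the main obstacle is indeed where the argument breaks down. The paper does \emph{not} reduce the inequality to a vanishing statement. Instead, Proposition~\ref{from.exp.to.e} (proved via Fourier analysis on finite abelian groups, Lemma~\ref{four} and Corollary~\ref{linear}) produces a $\cCe$-function $\tilde H$ with $\frac{1}{N}|\tilde H_F(x)|\le |H_{F,\psi_1}(x)|^2$ for some $\psi_1$ depending on $x$, and $|H_{F,\psi}(x)|^2\le|\tilde H_F(x)|$ for all $\psi$. The hypothesis then yields $|\tilde H_F|\le N|G_F|^2$, an inequality between two $\cCe$-functions; and the $\cCe$ case is handled not by a vanishing argument at all, but by Proposition~\ref{lem:presburger-fam} (cell decomposition pushes $\cCe$-functions into residue-field and value-group variables only) together with quantifier elimination in the Pas language, which shows that $H_{F_1}$ and $H_{F_2}$ are \emph{literally the same function} of those data when $k_{F_1}\cong k_{F_2}$. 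The inequality therefore transfers because it is identical on both sides, which is considerably stronger than a vanishing transfer.

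Your proposed mechanism — encode ``$|H_{F,\psi}(x)|\le|G_F(x)|$'' as the vanishing of a $\cCexp$-function, possibly via a ``positive part'' — does not go through as stated: $|\cdot|_\CC$, real positive parts, and the order relation $\le$ on $\RR$ are not operations internal to $\cCexp(X)$, so there is no function in $\cCexp$ whose vanishing is equivalent to the inequality, and the dummy-parameter trick is not spelled out in a way that fixes this. A further symptom that something is off is your account of $N$: you attribute the loss to clearing factors $1-q_F^{a_{i\ell}}$ and polynomial coefficients, but if that were the source, one would still need $N>1$ in the $\cCe$ case, where those factors are equally present; in fact $N=1$ there, and the genuine source of $N$ in the paper is the two-sided comparison of $|H_{F,\psi}|^2$ with $|\tilde H_F|$ in Proposition~\ref{from.exp.to.e}, i.e., the cost of averaging out the oscillation over $\psi$. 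Your treatment of the $G=0$ locus and your instinct to invoke \cite[Proposition 9.2.1]{CLexp} there are fine but are a small corner of the problem; the real content is the comparison on $\{G\ne 0\}$, and for that you need the $\cCexp\to\cCe$ reduction, not a reduction to vanishing.
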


As mentioned in the introduction, the case where $G=0$ is \cite[Proposition 9.2.1]{CLexp}, and the case that both $H$ and $G$ lie in $\cC(X)$ is {\cite[Theorem B.7]{ShinTemp}}.

We also show the following strengthening of Theorem \ref{thm:transfer-gen}, for linear combinations.

\begin{thm}[Transfer principle for bounds of linear combinations]\label{thm:transfer-str}
Let $X$ be a definable set, let  $H_i$ be in  $\cCexp(X)$ for $i=1\,\ldots,\ell$, and let $G$ be in $\cCe(X)$.
Then there exist $M$ and $N$ such that, for any $F\in \Loc_{\ri, M}$,
the following holds for any $c=(c_i)_i$ in $\CC^\ell$.
If
\begin{equation}\label{str:transfer-gen}
|\sum_{i=1}^\ell c_i H_{i,F,\psi} (x)|_\C \le |G_{F}(x)|_\CC   \mbox{ for all } (\psi,x) \in \cD_F\times X_F
\end{equation}
then, for any local field $F'$ with the same residue field as $F$, one has
\begin{equation}\label{str:transfer-gen:eq}
|\sum_{i=1}^\ell c_i H_{i,F',\psi} (x)|_\C \le N \cdot |G_{F'}(x)|_\CC   \mbox{ for all } (\psi,x) \in \cD_{F'}\times X_{F'}.
\end{equation}
Moreover, one can take $N=1$ if  {the $H_i$ lie} in $\cCe(X)$.
\end{thm}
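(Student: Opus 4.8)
For a fixed $c\in\CC^\ell$, applying Theorem~\ref{thm:transfer-gen} to the single function $\sum_{i=1}^\ell c_iH_i\in\cCexp(X)$ and to $G$ already yields a suitable threshold and constant; the content of Theorem~\ref{thm:transfer-str} is only that they can be chosen uniformly in $c$. The plan is to see this by checking that $c$ enters the proof of Theorem~\ref{thm:transfer-gen} only linearly, through the coefficient functions lying in $\cC$. As there, one first reduces to the case $H_1,\dots,H_\ell,G\in\cCe(X)$, at the cost of the constant $N$ (with $N=1$ when the $H_i$ already lie in $\cCe(X)$): the reduction of the oscillation to the residue field recalled in Section~\ref{sec:motfun} proceeds by a cell decomposition in the oscillatory variables followed by evaluation of the resulting residue-field character sums, and it modifies the coefficient functions only by multiplication with cell-dependent factors; carrying out a single cell decomposition refining the definable data of $H_1,\dots,H_\ell$ simultaneously thus makes this reduction $\CC$-linear on the $\CC$-span of $H_1,\dots,H_\ell$, so the inequality it produces over $F$ for the reduced $\cCe$-data --- which still entails \eqref{str:transfer-gen:eq} --- comes with a constant independent of $c$. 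Henceforth assume $H_i,G\in\cCe(X)$ and aim at $N=1$.

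In this case the $H_{i,F}$ and $G_F$ do not depend on $\psi$, so \eqref{str:transfer-gen} just reads $\bigl|\sum_{i=1}^\ell c_iH_{i,F}(x)\bigr|_\CC\le|G_F(x)|_\CC$ for all $x\in X_F$, and it is enough to prove that this condition on $c$ is unchanged when $F$ is replaced by any local field $F'$ with the same residue field. I would deduce this from a \emph{residue-field normal form} for $\cCe$-functions, to be extracted from (the proof of) the $\cC$-transfer principle \cite[Theorem~B.7]{ShinTemp}: once the residue field $k$ of $F$ is fixed and $M$ is large enough --- in terms of $X$, $G$ and $H_1,\dots,H_\ell$ only --- the functions $G_F,H_{1,F},\dots,H_{\ell,F}$ all factor through one and the same definable map $\lambda\colon X\to\RF^a\times\ZZ^b$, say $H_{i,F}=\widetilde H_i\circ\lambda_F$ and $G_F=\widetilde G\circ\lambda_F$, where the image $\lambda_F(X_F)$ and the functions $\widetilde H_i$, $\widetilde G$ (valued in a fixed subring of $\CC$) are cut out, respectively computed, by formulas over the $\RF$ and $\ZZ$ sorts alone --- no valued-field quantifier survives Pas's quantifier elimination --- and hence depend only on $k$, not on $F$, for $F\in\Loc_{\ri,M}$ with residue field $k$.

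Granting the normal form, the condition ``$\bigl|\sum_i c_iH_{i,F}(x)\bigr|_\CC\le|G_F(x)|_\CC$ for all $x\in X_F$'' becomes, for such $F$, literally ``$\bigl|\sum_i c_i\widetilde H_i(s)\bigr|_\CC\le|\widetilde G(s)|_\CC$ for all $s\in\lambda_F(X_F)$'', which mentions $F$ only through its residue field and therefore holds over $F'$ exactly when it holds over $F$; this proves the $\cCe$-case with $N=1$, and uniformly in $c$ at no extra cost. I expect the extraction of this normal form to be the main obstacle: making it explicit --- rather than citing Theorem~B.7 as a black box --- is precisely what is needed for the bound to be visibly uniform in the scalars, and one has to keep careful track of how the coefficient ring $\ri$ meets the residue field $k$, which is what forces the lower bound $M$ and, when $F$ is not of equal characteristic, is where the real difficulty lies. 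A secondary point that needs checking is the $\CC$-linearity of the reduction to $\cCe(X)$ used in the first paragraph.
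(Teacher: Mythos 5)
Your overall strategy is right and matches the paper's: reduce to the $\cCe$ case by a cell decomposition common to $H_1,\dots,H_\ell$, then settle the $\cCe$ case with $N=1$ by factoring everything through a definable map into $\RF^m\times\ZZ^r$ so that the data depend on the local field only through its residue field. Your second paragraph (the $\cCe$ case via the ``residue-field normal form'') is essentially the paper's use of Proposition~\ref{lem:presburger-fam} plus Pas quantifier elimination, and is uniform in $c$ for free since the equality $H_{i,F_1}=H_{i,F_2}$ for each $i$ separately is what does the work.

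The problem is in the first paragraph, and it is not a secondary point as you suggest but the heart of the reduction. The reduction from $\cCexp$ to $\cCe$ is \emph{not} $\CC$-linear in the $H_i$: the cell-decomposition step, which rewrites each $H_i$ as $H_{i,F,\psi}(x)=\sum_{x'\in X'_{F,x}}\psi(g'_F(x'))H'_{i,F}(x')$ with $H'_i\in\cCe(X')$, is indeed linear, but that expression still carries the valued-field oscillation $\psi(g'_F(\cdot))$ and so is not yet in $\cCe$. The oscillation is removed by passing to $|\cdot|^2$ — Plancherel on the finite group of Corollary~\ref{linear} controls the sup over $\psi$ by the $L^2$-norm of the coefficients — and this step is sesquilinear, not linear, in the $H_i$. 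Concretely, the paper's Proposition~\ref{from.exp.to.e:str} produces a family of $\ell^2$ functions $\tilde H_{i,s}\in\cCe(X)$ and compares $\bigl|\sum_i c_i H_{i,F,\psi}(x)\bigr|^2$ two-sidedly (with $c$-independent constants, thanks to the common cell decomposition) to $\bigl|\sum_{i,s} c_i\bar c_s\,\tilde H_{i,s,F}(x)\bigr|$, after which the $\cCe$ transfer is applied to this sesquilinear combination against $|G|^2$. So ``checking that $c$ enters only linearly'' is false; what one must check instead is that the quadratic dependence on $c$ factors as a fixed sesquilinear form with $\cCe$ coefficients, and that the constants in the two-sided comparison come only from the (shared) cell-decomposition data. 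That is exactly what Proposition~\ref{from.exp.to.e:str} supplies, and once you state it this way your argument goes through, since the $\cCe$ transfer already handles arbitrary linear combinations of $\cCe$ functions with complex scalars uniformly — including the reindexed combination $\sum_{i,s}(c_i\bar c_s)\tilde H_{i,s}$.
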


The key improvement of Theorem \ref{thm:transfer-str} (compared to Theorem~\ref{thm:transfer-gen}) is that the choice of $M$ and $N$ works uniformly in $c$.

Although in our proofs, the integer $N$ of Theorems \ref{thm:transfer-gen} and \ref{thm:transfer-str} appears naturally, it is not unconceivable that one can take $N$ close to $1$ even when $H$ does not lie in $\cCe(X)$.

The first statement of the following proposition allows to transfer linear (in-)dependence of motivic exponential functions;
its proof follows quite directly from transfer of  {identical vanishing of motivic functions}
(the $G=0$ case of  {Theorem} \ref{thm:transfer-gen}). The second statement describes how the
coefficients in a linear
{relation} can depend on the local field,  {the additive character, and } parameters; see below for more explanation.

\begin{prop}[Transfer principle for linear dependence]\label{cor:transfer:indep:basic}

Let $X$ and $Y$ be definable sets and let $H_i$ be in  $\cCexp(X \times Y)$ for $i=1\,\ldots,\ell$.
\begin{enumerate}
 \item
There exists $M$ such that, for any $F,F'\in \Loc_{\ri, M}$ with $k_F\cong k_{F'}$,
the following holds:

If for each $\psi\in\cD_F$ and each $y \in Y_F$ the functions $H_{i,F,\psi}(\cdot, y):X_F\to\CC$ for $i=1,\ldots,\ell$ are linearly dependent,
then, also for each $\psi \in \cD_{F'}$ and each $y \in Y_{F'}$, the functions $H_{i,F',\psi}(\cdot, y)$ are linearly dependent.
\item
Let moreover $G$ be in $\cCexp(X \times Y)$. Then there exists a definable set $W$ and
functions $C_i$ and $D$ in $\cCexp(W \times Y)$ such that the following holds for $M$ sufficiently big.
For every $F \in \Loc_{\ri, M}$, for every $\psi\in \cD_F$, and for every $y \in Y_F$, if
the functions $H_{i,F,\psi}(\cdot,y)$ (on $X_F$) are linearly independent and
\[
G_{F,\psi}(\cdot, y) = \sum_{i=1}^{\ell} c_i H_{i, F, \psi}(\cdot, y)
\]
for some $c_i \in \CC$, then $D_{F, \psi}(\cdot, y)$ is not identically zero on $W_F$, and for all $w\in W_F$
\[
D_{F, \psi}(w,y) c_i=C_{i, F, \psi}(w,y).
\]
\end{enumerate}
\end{prop}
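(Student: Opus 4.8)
The plan is to prove part (1) first and then bootstrap part (2) from a suitable variant of the $G = 0$ case of Theorem \ref{thm:transfer-gen}.

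For part (1), the key observation is that linear dependence of the functions $H_{i,F,\psi}(\cdot,y)$, as $\psi$ and $y$ range over $\cD_F \times Y_F$, can be expressed uniformly. Fix $\ell$ auxiliary ``test points'' in $X$; more precisely, work over the definable set $X^\ell$ (an $\ell$-tuple of points $x_1,\dots,x_\ell$ of $X$), and consider the $\ell\times\ell$ matrix $\big(H_{i,F,\psi}(x_j,y)\big)_{i,j}$. The functions $H_{i,F,\psi}(\cdot,y)$ are linearly dependent if and only if for \emph{every} choice of $(x_1,\dots,x_\ell)\in X_F^\ell$ this matrix is singular, i.e.\ its determinant vanishes. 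The determinant is a $\ZZ$-linear combination of products of the $H_i$, hence again lies in $\cCexp(X^\ell \times Y)$ (the class $\cCexp$ is a ring, so products and $\ZZ$-combinations stay inside it). Thus the hypothesis of (1) says precisely that a certain function $E \in \cCexp\big((X^\ell\times Y)\big)$, namely this determinant viewed as a function of $(x_1,\dots,x_\ell,y)$, satisfies $E_{F,\psi} \equiv 0$ on $(X^\ell\times Y)_F$ for all $\psi$. By the $G=0$ case of Theorem \ref{thm:transfer-gen} (equivalently \cite[Proposition 9.2.1]{CLexp}), identical vanishing of a $\cCexp$-function transfers between local fields with isomorphic residue fields, once $M$ is large enough; applying this to $E$ gives $E_{F',\psi}\equiv 0$, which is exactly linear dependence over $F'$. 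Note one must be slightly careful that the transfer of identical vanishing is stated ``for all $\psi$ and all points simultaneously'', which is indeed the form in \cite{CLexp}; since here the conclusion we want is also ``for all $\psi\in\cD_{F'}$ and all $y\in Y_{F'}$'', this matches directly.

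For part (2), the idea is Cramer's rule, made uniform. Again pass to $X^\ell$ and write $A_{F,\psi,y}$ for the matrix $\big(H_{i,F,\psi}(x_j,y)\big)_{i,j}$ and $b_{F,\psi,y}$ for the column vector $\big(G_{F,\psi}(x_j,y)\big)_j$, both defined over the definable set $W := X^\ell \times Y$ (so a point $w\in W_F$ packages the tuple $(x_1,\dots,x_\ell)$ together with $y$; to be pedantic one can take $W = X^\ell$ and keep $Y$ as the extra parameter as in the statement). Set $D_{F,\psi}(w,y) := \det A_{F,\psi,y}$, and let $C_{i,F,\psi}(w,y)$ be the determinant of the matrix obtained from $A$ by replacing its $i$-th column by $b$. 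Both are $\ZZ$-linear combinations of products of the $H_i$ and $G$, hence lie in $\cCexp(W\times Y)$. Now suppose the $H_{i,F,\psi}(\cdot,y)$ are linearly independent and $G_{F,\psi}(\cdot,y) = \sum_i c_i H_{i,F,\psi}(\cdot,y)$. Linear independence of $\ell$ functions on $X_F$ means there \emph{exists} a tuple $(x_1,\dots,x_\ell)\in X_F^\ell$ with $\det A \neq 0$; hence $D_{F,\psi}(\cdot,y)$ is not identically zero on $W_F$ — this is the required non-vanishing conclusion. And for every $w = (x_1,\dots,x_\ell)$, evaluating the relation at each $x_j$ gives the linear system $A_{F,\psi,y}\, c = b_{F,\psi,y}$ (with $c = (c_1,\dots,c_\ell)^{\mathsf T}$); Cramer's rule then yields $D_{F,\psi}(w,y)\, c_i = C_{i,F,\psi}(w,y)$ for every $w$, as desired. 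This holds for \emph{all} $w\in W_F$, including the ones where $D$ vanishes (both sides are then $0$ automatically, since when $\det A = 0$ the $i$-th Cramer determinant also vanishes as $b$ is in the column span of $A$).

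The main obstacle I anticipate is purely bookkeeping: ensuring that all the auxiliary objects (the power $X^\ell$, the determinant functions, the substitutions $x \mapsto x_j$) are genuinely definable/motivic-exponential in the uniform sense of Section \ref{sec:motfun}, and that one can choose a single $M$ working for all the finitely many functions involved. The substitution of a definable tuple of points into a $\cCexp$-function is a composition with a definable map, which preserves $\cCexp$; determinants are polynomial, hence preserved; so no real difficulty arises, but it should be spelled out. A secondary point worth a sentence is that part (1) as stated asks for transfer of linear dependence \emph{conditionally on it holding for all $\psi$ and all $y$}, which is why passing to the single function $E$ over $X^\ell\times Y$ and quoting the identical-vanishing transfer is exactly the right move — no uniformity-in-scalars issue appears here, unlike in Theorem \ref{thm:transfer-str}, because dependence/independence is a scalar-free notion.
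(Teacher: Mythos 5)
Your proof is correct and follows essentially the same route as the paper's: passing to $X^\ell$, encoding linear dependence via the determinant function in $\cCexp(X^\ell\times Y)$ and invoking the transfer of identical vanishing from \cite[Proposition 9.2.1]{CLexp} for part (1), and Cramer's rule over $W=X^\ell$ for part (2). The only step you assert without argument is the nontrivial direction of the determinant criterion (identical vanishing of $\det(f_i(z_j))_{i,j}$ on $A^\ell$ implies linear dependence), which the paper isolates and proves as Lemma~\ref{lem:indep}; this is classical and your use of it is correct.
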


The second part of the proposition, essentially, states that the coefficients $c_i$ are ratios of motivic exponential functions.
{However,} our proof needs an additional parameter $w$ to  {write the $c_i$ as ratios}: both, $C_i$ and $D$ depend on $w$ and only their quotient is independent of $w$,  {for $w$ with $D_{F,\psi}(w,y)$ nonzero}.
Note that despite this complication, the proposition permits to apply transfer principles to the constants $c_i$.
(One  {of course does not need $w$ if there is} a definable function $h\colon Y \to W$ such that  {$D_{F,\psi}\circ h_F$ is}  {nowhere zero}.)

Proposition \ref{cor:transfer:indep:basic} naturally applies also in case that the $H_i$ and $G$ are in $\cCexp(Z)$ for some definable subset $Z$ of $X\times Y$, instead of in $\cCexp(X \times Y)$. Indeed, one can extend the $H_i$ by zero outside $Z$ and apply the proposition to these extensions.

Finally, we note the following corollary of Theorem~\ref{thm:transfer-str},
showing that the complex coefficients of a linear relation between motivic exponential functions stay \emph{the same}
(regardless of their motivic interpretation as in Proposition~\ref{cor:transfer:indep:basic} above) in situations where these coefficients are independent of the additive character. This independence is a strong assumption, but note that it in particular
applies to arbitrary linear relations of motivic non-exponential functions.

\begin{cor}[Transfer principle for coefficients of linear relations]\label{cor:transfer:indep}
Let $X$ be a definable set and let  $H_i$ be in  $\cCexp(X)$ for $i=1\,\ldots,\ell$.
Then there exists $M$ such that, for any $F\in \Loc_{\ri, M}$,
the following holds for any $c=(c_i)_i$ in $\CC^\ell$.

If
$$
\sum_{i=1}^\ell c_i H_{i,F,\psi}  =0 \mbox{ on $X_F$ for all $\psi\in \cD_F$,}
$$
then, for any $F'\in \Loc_{\ri, M}$ with $k_F \cong k_{F'}$, one also has
$$
\sum_{i=1}^\ell c_i H_{i,F',\psi'}  =0 \mbox{ on $X_{F'}$ for all $\psi'\in \cD_{F'}$.}
$$
\end{cor}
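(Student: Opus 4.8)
The plan is to deduce Corollary~\ref{cor:transfer:indep} from Theorem~\ref{thm:transfer-str} by taking $G=0$ and exploiting the fact that, under the stated hypothesis, the coefficients $c_i$ are \emph{character-independent}, so that no amplification by $N$ can occur. Concretely: apply Theorem~\ref{thm:transfer-str} to the family $H_1,\ldots,H_\ell$ with $G=0$. This produces $M$ and $N$ (here $G=0$ forces $N$ irrelevant, but let us keep it) such that for every $F\in\Loc_{\ri,M}$, every $c\in\CC^\ell$, if $|\sum_i c_i H_{i,F,\psi}(x)|_\C\le 0$ for all $(\psi,x)$, then $|\sum_i c_i H_{i,F',\psi}(x)|_\C\le N\cdot 0=0$ for all $(\psi,x)$ and all $F'$ with $k_{F'}\cong k_F$. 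Since $|\,\cdot\,|_\C\le 0$ is equivalent to vanishing, this says exactly: $\sum_i c_i H_{i,F,\psi}=0$ on $X_F$ for all $\psi\in\cD_F$ implies $\sum_i c_i H_{i,F',\psi'}=0$ on $X_{F'}$ for all $\psi'\in\cD_{F'}$. This is the desired conclusion; in fact the $G=0$ case here is nothing but the $G=0$ case of Theorem~\ref{thm:transfer-gen} applied uniformly in $c$.

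An alternative, more self-contained route avoids invoking the linear-combination strengthening and instead builds a single auxiliary function. Form the definable set $X\times\RF^\ell$ (or rather parametrize $c$ crudely) — but $c\in\CC^\ell$ is not definable, so this does not literally work; the genuine point of Theorem~\ref{thm:transfer-str} over Theorem~\ref{thm:transfer-gen} is precisely the uniformity in the \emph{complex} scalars $c$, which one cannot fold into a definable parameter. Hence the clean proof really must go through Theorem~\ref{thm:transfer-str} (or a direct inspection of its proof). So the write-up is: invoke Theorem~\ref{thm:transfer-str} with $\ell$ as given and $G$ the zero function in $\cCe(X)$, obtain $M$ (and $N$, which plays no role since the right-hand side is $0$), and observe that $|z|_\C\le 0\iff z=0$ translates both hypothesis and conclusion into the vanishing statements of the corollary. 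One should also remark why $F'$ ranges over $\Loc_{\ri,M}$ rather than arbitrary local fields: the corollary as stated restricts to $F'\in\Loc_{\ri,M}$, matching exactly the output of Theorem~\ref{thm:transfer-str}.

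I do not expect any serious obstacle here: the corollary is essentially a restatement of the $G=0$ instance of the stronger theorem, modulo the trivial observation that an inequality $|\cdot|_\C\le 0$ encodes identical vanishing. The only mild subtlety is making sure the roles of the quantifiers match — in particular that Theorem~\ref{thm:transfer-str} already gives the conclusion for \emph{all} $\psi'\in\cD_{F'}$ simultaneously (it does, since its conclusion \eqref{str:transfer-gen:eq} is universally quantified over $(\psi,x)\in\cD_{F'}\times X_{F'}$) and that the choice of $M$ is uniform in $c$ (this is exactly the ``key improvement'' highlighted after Theorem~\ref{thm:transfer-str}). A short paragraph suffices.

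\begin{proof}
Apply Theorem~\ref{thm:transfer-str} to the functions $H_1,\ldots,H_\ell\in\cCexp(X)$ and to $G=0\in\cCe(X)$. This yields integers $M$ and $N$ such that, for every $F\in\Loc_{\ri,M}$ and every $c=(c_i)_i\in\CC^\ell$, if
\[
\Big|\sum_{i=1}^\ell c_i H_{i,F,\psi}(x)\Big|_\C\le |0|_\CC=0\quad\text{for all }(\psi,x)\in\cD_F\times X_F,
\]
then, for every local field $F'$ with $k_{F'}\cong k_F$,
\[
\Big|\sum_{i=1}^\ell c_i H_{i,F',\psi}(x)\Big|_\C\le N\cdot 0=0\quad\text{for all }(\psi,x)\in\cD_{F'}\times X_{F'}.
\]
Since for a complex number $z$ one has $|z|_\C\le 0$ if and only if $z=0$, the hypothesis above is equivalent to $\sum_{i=1}^\ell c_i H_{i,F,\psi}=0$ on $X_F$ for all $\psi\in\cD_F$, and the conclusion is equivalent to $\sum_{i=1}^\ell c_i H_{i,F',\psi'}=0$ on $X_{F'}$ for all $\psi'\in\cD_{F'}$. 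Restricting $F'$ to $\Loc_{\ri,M}$ (which is all that is claimed), this is precisely the assertion of the corollary, with the same $M$, uniformly in $c$.
\end{proof}
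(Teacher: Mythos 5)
Your proof is correct and is essentially identical to the paper's, which reads in full: ``Just apply Theorem~\ref{thm:transfer-str} with $G=0$.'' Your additional remarks (that $|z|_\C\le 0$ encodes vanishing, and that the uniformity in $c$ is exactly what Theorem~\ref{thm:transfer-str} adds over Theorem~\ref{thm:transfer-gen}) are accurate elaborations of the same argument.
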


\begin{proof}
Just apply Theorem \ref{thm:transfer-str} with $G=0$.
\end{proof}

\section{Proofs of the transfer principles}\label{theproofs}

Before proving Theorem \ref{thm:transfer-gen}, we give a proposition relating the square of the complex modulus of a motivic exponential function to the complex modulus of a function where the oscillation only comes from the residue field.

\begin{prop}\label{from.exp.to.e}
Let $H$ be in $\cCexp(X)$ for some definable set $X$. Then there exist
$ {\tilde H}$ in $\cCe(X)$ and integers $M$ and $N$  such that for all $F\in \Loc_{\ri, M}$ the following hold for all $x\in X_F$.
\begin{enumerate}
 \item
 There is $\psi_1$ in $\cD_F$ (depending on $x$) such that
$$
\frac{1}{N}| {\tilde H}_F(x)|_\CC \leq | H_{F,\psi_1}(x)|_\CC^2.
$$
\item
For all $\psi$ in $\cD_F$, one has
$$
|H_{F,\psi}(x)|_\CC^2 \leq | {\tilde H}_F(x)|_\CC.
$$
\end{enumerate}
\end{prop}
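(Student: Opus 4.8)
The plan is to expose the inner exponential sum of $H$ and ``multiply it by its complex conjugate'' while absorbing the valued-field part $g_{iF}$ into the residue-field oscillation by a clever choice of $\psi$. Write $H_{F,\psi}(x)=\sum_{i=1}^N H_{iF}(x)\big(\sum_{y\in Y_{i,F,x}}\psi(g_{iF}(x,y)+e_{iF}(x,y))\big)$ as in \eqref{fexp}. Then $|H_{F,\psi}(x)|_\CC^2$ is a double sum over pairs $(i,y)$ and $(i',y')$ of terms $H_{iF}(x)\overline{H_{i'F}(x)}\,\psi\big((g_{iF}(x,y)-g_{i'F}(x,y'))+(e_{iF}(x,y)-e_{i'F}(x,y'))\big)$. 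The difference $g_{iF}(x,y)-g_{i'F}(x,y')$ is a value in $F$; its contribution to $\psi$ depends, through \eqref{psiu} and the defining property that $\psi$ is trivial on $\cM_F$ and nontrivial on $\cO_F$, only on (a) whether its valuation is $\geq 0$, and (b) if the valuation is exactly $0$, on its angular component --- all of which are controlled by $\ldpo$-definable conditions on $x,y,y'$. So after a definable case split, the whole double sum is visibly of the form $\sum H_{jF}(x)\sum_{z}\psi(0+\tilde e_{jF}(x,z))$ with $\tilde e_j$ residue-field valued, i.e.\ it defines a function $\tilde H\in\cCe(X)$, independent of $\psi$. (One must note $|\cdot|_\CC^2$ of a motivic exponential function is again motivic exponential --- this is because $\cCexp$ is a ring closed under complex conjugation, the latter by negating $g_i,e_i$ and using that $\overline{\psi}=\psi_{-1}$-type manipulations stay inside the class; alternatively one absorbs conjugation directly in the case split above.)

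For part~(2), once $|H_{F,\psi}(x)|_\CC^2$ has been rewritten as above \emph{as a function of $x$ alone} for a \emph{fixed} generic pairing of oscillations, the subtlety is that the rewriting depended on $\psi$ only through the finitely many definable conditions (a)--(b), which do not actually depend on the choice of $\psi\in\cD_F$: by \eqref{psiu} all $\psi\in\cD_F$ agree on $\cO_F$, and all are trivial on $\cM_F$, so ``$\psi(a)=1$ iff $\ord a\geq 1$'' and, for $\ord a=0$, ``$\psi(a)=\mexp(\tr_{k_F/\FF_p}(\ac a))$'' hold for \emph{every} $\psi\in\cD_F$. Hence the same $\tilde H\in\cCe(X)$ works for every $\psi$, and $|H_{F,\psi}(x)|_\CC^2=|\tilde H_F(x)|_\CC$ on the nose --- in fact one gets equality, which gives (2) with room to spare (the inequality is stated because in (1) one only has a lower bound).

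For part~(1), I would argue that for each fixed $x$ there is at least one $\psi_1\in\cD_F$ for which the off-diagonal cross terms in $|H_{F,\psi_1}(x)|_\CC^2$ do not destroy too much of the ``diagonal mass'': more precisely, $\tilde H_F(x)$ is a finite $\ZZ[\text{values of }\mexp]$-combination of at most (number of pairs) $\leq N^2\cdot(\max r_i\text{-sized sets})$ terms, each a product of two $H_{iF}(x)$'s, and for a suitable $\psi_1$ one has $|\tilde H_F(x)|_\CC \leq N\cdot|H_{F,\psi_1}(x)|_\CC^2$ with $N$ a \emph{uniform} bound on that number of terms (independent of $F$ and $x$, since $N,N',N'',r_i$ in Definition~\ref{motfun}/\ref{expfun} are fixed). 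Concretely: $\tilde H_F(x)=\sum_\psi$-independent expression; averaging or a pigeonhole over the finitely many ``oscillation types'' shows some $\psi_1$ realizes $|H_{F,\psi_1}(x)|_\CC^2\geq \frac1N|\tilde H_F(x)|_\CC$. The existence of a \emph{definable} selection $x\mapsto\psi_1$ need not be asserted --- the statement only requires ``there is $\psi_1$ depending on $x$'' --- so no definability of the choice is needed, which simplifies matters.

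The main obstacle I anticipate is the bookkeeping in the first step: making the case split on $\ord(g_{iF}(x,y)-g_{i'F}(x,y'))$ and its angular component genuinely $\ldpo$-definable and uniform in $F$, and checking that the resulting $\tilde H$ really lands in $\cCe(X)$ (all valued-field oscillation gone, only $\RF$-valued $e$'s remaining) rather than merely in $\cCexp(X)$. A secondary subtlety is extracting the \emph{uniform} integer $N$ in part~(1): one must track that the number of summands contributing to $\tilde H_F(x)$ is bounded purely in terms of the fixed data $N,r_i$ from \eqref{fexp}, with no dependence on $q_F$; this should follow since each $Y_{i,F,x}$ contributes $\#Y_{i,F,x}$-many terms, but those cardinalities get reorganized into the $\#$-counts defining $\tilde H$ as a $\cCe$-function, so the ``number of terms'' $N$ in the inequality is really the number of definable pieces in the case split, which is fixed.
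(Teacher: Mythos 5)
The central step of your argument fails. You claim that for $a = g_{iF}(x,y)-g_{i'F}(x,y')\in F$, the value $\psi(a)$ is determined by definable data (whether $\ord a\ge 0$ and, when $\ord a=0$, the angular component). This is only true for $a\in\cO_F$, where all $\psi\in\cD_F$ agree by (\ref{psiu}). When $\ord a<0$ --- which is exactly the interesting case, since that is where the valued-field oscillation lives --- the value $\psi(a)$ genuinely depends on the choice of $\psi\in\cD_F$ and cannot be rewritten as residue-field oscillation. Consequently the cross terms in your expansion of $|H_{F,\psi}(x)|_\CC^2$ do not collapse into a function of $\cCe(X)$, and your claimed identity $|H_{F,\psi}(x)|_\CC^2=|\tilde H_F(x)|_\CC$ for all $\psi$ is false: since $\tilde H_F$ does not depend on $\psi$, it would force $|H_{F,\psi}(x)|_\CC$ to be independent of $\psi$, which already fails for $H_{F,\psi}=1+\psi(\varpi^{-1})$. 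This is precisely why the proposition asserts a two-sided inequality with a loss of $N$ rather than an equality.

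The correct route is to group the terms of (\ref{fexp}) according to the class of $g_{iF}(x,y)$ modulo $\cO_F$: cell decomposition yields a definable $X'\subset X\times\RF^t$ with fibers $X'_{F,x}$ of cardinality at most a uniform $N'$, parametrizing these classes, so that $H_{F,\psi}(x)=\sum_{x'\in X'_{F,x}}\psi(g'_F(x'))H'_F(x')$ with $H'\in\cCe(X')$; one then sets $\tilde H_F(x)=N'\sum_{x'}|H'_F(x')|_\CC^2$ and $N=N'^2$. The upper bound (2) comes from $(\sum_{x'}|a_{x'}|)^2\le N'\sum_{x'}|a_{x'}|^2$, i.e., from discarding the $\psi$-dependent cross terms at the price of the constant --- not from an exact identity. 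The lower bound (1) is not a soft pigeonhole: one needs the Fourier-analytic fact (Lemma \ref{four}, Corollary \ref{linear}) that for distinct elements $y_j$ of a finite abelian group there is a character $\varphi_0$ with $\sup_j|c_j|\le|\sum_j c_j\varphi_0(y_j)|$; applied to a finite subgroup of $F/\cO_F$ containing the classes $g'_F(x')$ (made pairwise distinct with $\ord(g'_F(x')-g'_F(x''))<0$ by a normalization), this produces the required $\psi_1$. Your averaging remark points in a workable direction, but it is not carried out and in any case rests on the flawed construction of $\tilde H$.
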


The proof of Proposition \ref{from.exp.to.e} uses an elementary result from Fourier analysis, which we now recall.
\begin{lem}\label{four}
Consider a finite abelian group $G$ with dual group $\hat G$ and with $|G|$ elements.
For any function $f:G\to\CC$ one has
$$
\frac{1}{|G|} \|\hat f\|_{\sup} \leq \| f\|_{\sup} \leq \|\hat  f\|_{\sup}
$$
where $\|\cdot\|_{\sup} $ is the supremum norm and $\hat f$ the Fourier transform of $f$, namely
$$
\hat f ( \varphi ) = \sum_{x\in G} f(x) \varphi(x) \mbox{ for } \varphi \in \hat G.
$$
\end{lem}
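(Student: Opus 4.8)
The plan is to prove Lemma~\ref{four} by elementary Fourier analysis on a finite abelian group, using orthogonality of characters. The two inequalities go in opposite directions, and each follows from an explicit formula: the upper bound $\|\hat f\|_{\sup} \ge \|f\|_{\sup}$ comes from expressing $f$ via Fourier inversion, while the lower bound $\frac{1}{|G|}\|\hat f\|_{\sup} \le \|f\|_{\sup}$ comes from the defining formula for $\hat f$ directly.

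First I would set up Fourier inversion on $G$. With the (unnormalized) transform $\hat f(\varphi) = \sum_{x \in G} f(x)\varphi(x)$ as in the statement, orthogonality of characters gives $\sum_{\varphi \in \hat G} \varphi(x)\overline{\varphi(y)} = |G|\cdot \mathbf{1}_{x=y}$, and hence the inversion formula
\[
f(x) = \frac{1}{|G|} \sum_{\varphi \in \hat G} \hat f(\varphi)\,\overline{\varphi(x)}.
\]
(One must be slightly careful about conventions: depending on whether one pairs with $\varphi$ or $\bar\varphi$ in the definition of $\hat f$, the inversion formula carries the appropriate conjugate; in any case $|\varphi(x)| = 1$ for all characters $\varphi$ and all $x$, which is all that is used for the bounds.)

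Next I would derive the two inequalities. For the right-hand inequality, fix $x \in G$ achieving $|f(x)| = \|f\|_{\sup}$; then by the inversion formula and the triangle inequality,
\[
\|f\|_{\sup} = |f(x)| \le \frac{1}{|G|}\sum_{\varphi \in \hat G} |\hat f(\varphi)|\,|\varphi(x)| \le \frac{1}{|G|}\cdot |G| \cdot \|\hat f\|_{\sup} = \|\hat f\|_{\sup},
\]
using $|\hat G| = |G|$ and $|\varphi(x)| = 1$. For the left-hand inequality, fix $\varphi \in \hat G$ achieving $|\hat f(\varphi)| = \|\hat f\|_{\sup}$; then directly from the definition,
\[
\|\hat f\|_{\sup} = |\hat f(\varphi)| = \Big| \sum_{x \in G} f(x)\varphi(x) \Big| \le \sum_{x \in G} |f(x)|\,|\varphi(x)| \le |G| \cdot \|f\|_{\sup},
\]
and dividing by $|G|$ gives the claim.

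There is no real obstacle here; the only thing requiring any care is bookkeeping of the Fourier-transform normalization convention so that the constant $|G|$ (rather than $1$ or $|G|^2$) appears in exactly the stated places, and observing that neither bound depends on the conjugation convention since only $|\varphi(x)| = 1$ enters. I would therefore keep the write-up to a few lines, citing orthogonality of characters as standard, and simply record the two displayed estimates above.
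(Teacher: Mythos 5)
Your proof is correct, but it takes a different route than the paper. The paper interpolates through the $L_2$ norm: it records the general comparison $\|g\|_{\sup}\le\|g\|_2\le\sqrt{|G|}\,\|g\|_{\sup}$ for $g\in\{f,\hat f\}$ and then invokes the Plancherel identity $\sqrt{|G|}\,\|f\|_2=\|\hat f\|_2$, from which both inequalities drop out symmetrically. You instead argue directly at the level of the sup norm: the left inequality $\frac{1}{|G|}\|\hat f\|_{\sup}\le\|f\|_{\sup}$ by bounding the defining sum $\hat f(\varphi)=\sum_x f(x)\varphi(x)$ term-by-term, and the right inequality $\|f\|_{\sup}\le\|\hat f\|_{\sup}$ by the Fourier inversion formula $f(x)=\frac{1}{|G|}\sum_\varphi \hat f(\varphi)\overline{\varphi(x)}$ (which uses orthogonality of characters) again with a term-by-term bound. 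Both arguments are short and elementary; the paper's is a touch more symmetric since the two sides are handled in exactly parallel fashion via Plancherel, while yours avoids introducing the $L_2$ norm at all and makes the source of each inequality (the defining sum vs.\ the inversion sum) more transparent. Your remark that only $|\varphi(x)|=1$ matters, so the conjugation convention is irrelevant, is a sensible clarification and correct.
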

\begin{proof}
Clearly one has
$$
\| f \|_{\sup} \leq \| f\|_2 \leq \sqrt{|G|}\| f\|_{\sup},
$$
and similarly for $\hat f$, where $\| \cdot\|_2$ is the $L_2$-norm, namely $\| f\|_2 = \sqrt{\sum_{g\in G}|f(g)|_\CC^2}$.
By Plancherel  {identity} one has
$$
\sqrt{|G|}\| f\|_2  = \| \hat f\|_2.
$$
The lemma follows.
\end{proof}

\begin{cor}\label{linear}
Consider a finite abelian group $G$ with dual group $\hat G$.
Consider a function
$$f:\hat G\to\CC:\varphi\mapsto \sum_{j=1}^s  c_j \varphi(y_j)$$
for some complex numbers $c_j$ and some distinct $y_j\in G$. Then there exists $\varphi_0\in \hat G$ with
$$
 \sup_{1\le j \le s} |c_j|_{\CC} \leq |f(\varphi_0)|_{\CC}.
$$
\end{cor}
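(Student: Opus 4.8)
The plan is to deduce Corollary \ref{linear} from Lemma \ref{four} by recognizing the function $f$ as a Fourier transform on the group $\hat G$. First I would note that, since $G$ is finite abelian, the double dual $\widehat{\widehat G}$ is canonically identified with $G$, so that each $y_j \in G$ gives a character $\varphi \mapsto \varphi(y_j)$ on $\hat G$, and these are precisely the elements of the dual of $\hat G$. Hence I would introduce the function $h\colon G \to \CC$ defined by $h(y_j) = c_j$ for $j = 1,\dots,s$ (using that the $y_j$ are distinct) and $h(y) = 0$ for all other $y \in G$. Then, reading the definition of the Fourier transform in Lemma \ref{four} with the roles of $G$ and $\hat G$ swapped (i.e.\ applying the lemma to the group $\hat G$, whose dual is $G$), we get
\[
\hat h(\varphi) = \sum_{y \in G} h(y)\,\varphi(y) = \sum_{j=1}^s c_j\,\varphi(y_j) = f(\varphi),
\]
so $f = \hat h$.

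Now I would apply the left-hand inequality of Lemma \ref{four}, in the form $\frac{1}{|G|}\|\hat h\|_{\sup} \le \|h\|_{\sup}$, which gives $\|h\|_{\sup} \ge \frac{1}{|G|}\|f\|_{\sup}$; but that introduces an unwanted factor of $|G|$. The correct move is instead to use the \emph{other} direction, $\|h\|_{\sup} \le \|\hat h\|_{\sup} = \|f\|_{\sup}$. Since $\|h\|_{\sup} = \sup_{1 \le j \le s}|c_j|_\CC$ by construction, this reads
\[
\sup_{1\le j \le s}|c_j|_\CC \;=\; \|h\|_{\sup} \;\le\; \|f\|_{\sup} \;=\; \sup_{\varphi \in \hat G}|f(\varphi)|_\CC,
\]
and since $\hat G$ is finite the supremum on the right is attained at some $\varphi_0 \in \hat G$, giving exactly $\sup_{1 \le j \le s}|c_j|_\CC \le |f(\varphi_0)|_\CC$ as desired.

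The only genuinely delicate point — and the step I would be most careful about — is the bookkeeping of which group plays the role of "$G$" in Lemma \ref{four}: the lemma is being invoked with the abelian group $\hat G$ in place of its "$G$", so that "$\hat G$" in the lemma becomes $\widehat{\widehat G} \cong G$, and the Fourier transform there sends functions on $\hat G$ to functions on $G$; one must check that the pairing conventions match so that $f$ really is the transform of $h$ and not of its conjugate or its reflection. Everything else — the nonvanishing of $h$, the attainment of the supremum over the finite set $\hat G$, the extraction of $\varphi_0$ — is immediate. No estimate on $|G|$ is needed, which is what makes this a clean corollary rather than merely a consequence with a dimensional constant.
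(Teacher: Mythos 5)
Your argument is correct and is exactly the intended deduction (the paper states this as an immediate consequence of Lemma~\ref{four} without separate proof): $f$ is the Fourier transform, in the sense of Lemma~\ref{four}, of the function $h$ on $G$ supported on the distinct points $y_j$ with values $c_j$, and the right-hand inequality $\|h\|_{\sup}\le\|\hat h\|_{\sup}=\|f\|_{\sup}$ gives the claim, the supremum over the finite set $\hat G$ being attained at some $\varphi_0$. The only thing to tidy up is your worry about ``swapping the roles of $G$ and $\hat G$'': no swap is needed, since $h$ is a function on $G$ and the lemma's transform $\hat h(\varphi)=\sum_{y\in G}h(y)\varphi(y)$ is already a function on $\hat G$, visibly equal to $f$.
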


Note that Corollary \ref{linear} generalizes Lemma 9.2.3 of \cite{CLexp},  {a} basic ingredient for proving the transfer principle  \cite[Proposition 9.2.1]{CLexp}.

We will use the  {simple} fact that for $n$ complex numbers $a_i$, one has
\begin{equation}\label{basicf}
\sum_{i=1}^n |a_i|_\CC^2 \leq (\sum_{i=1}^n |a_i|_\CC)^2 \leq n \cdot \sum_{i=1}^n |a_i|_\CC^2.
\end{equation}

\begin{proof}[Proof of Proposition \ref{from.exp.to.e}]
 {Recall} that we allow ourselves to increase $M$ whenever necessary without further mentioning.  {By}  ``for every $F$'' we shall always mean {for}  $F \in \Loc_{\ri, M}$.

Consider a general $H$ in $\cCexp(X)$ and write it as in (\ref{fexp}):
\begin{equation}\label{recall:fexp}
H_{F,\psi}(x)=\sum_{i}   H_{iF}(x)\Big( \sum_{y \in Y_{i,F,x}}\psi\big(g_{iF}(x,y)   +   e_{iF}(x,y)\big)\Big).
 \end{equation}
We will start by grouping the summands of the sum over $y$ according to the value of $g_{iF}(x,y)$ modulo $\cO_F$.
This is done as follows. For each $x \in  {X_F}$, the union of the images $A_{F,x} := \bigcup_i g_{iF}(Y_{i,F,x})$ is finite.
Therefore, the cardinality $\#A_{F,x}$ is bounded by some $N' > 0$ (independently of $x$ and $F$), and by cell decomposition
(in the form of Theorem 7.2.1 of \cite{CLoes}),
there exists a definable set $X'\subset X\times \RF^t$ (for some $t\geq 0$) and a definable function $g':X'\to \VF$
inducing a bijection $X'_{F,x} \to A_{F,x}$ for every $F$ and $x$ (where $X'_{F,x}$ is the fiber of $X'_F$ over $x \in X_F$).
This allows us to write $H$ as
\begin{equation}\label{H1}
H_{F,\psi}(x) = \sum_{x' \in X'_{F,x}}\psi(g'_F(x')) H'_F(x').
\end{equation}
for a suitable $H' \in \cCe(X')$; indeed, we can take $H'$ such that
\[
H'_F(x') = \sum_{i} H_{iF}(\pi(x'))
\sum_{\substack{y \in Y_{i,F,x}\\ g_{iF}( {x},y) = g'_F(x')\\   {\pi(x')=x } }} {\psi}\big(e_{iF}( {x},y)\big)
,
\]
where $\pi\colon X' \to X$ is the projection  {and with notation as in (\ref{fexp}) concerning $\psi(\xi)$ for $\xi \in k_F$, which does not depend on $\psi$ since it is fixed by (\ref{psiu}).}

This construction ensures that for $x', x'' \in X'_{F,x}$ with $x' \ne x''$, we have
$g'_{F}(x') \ne g'_{F}(x'')$. We can even achieve that for such $x', x''$ we have
\begin{equation}\label{<0}
\ord(g'_F(x') - g'_F(x''))<0,
\end{equation}
by modifying $g_{iF}$ and $e_{iF}$ in (\ref{recall:fexp}) in such a way that
$g'_{F}(x') \ne g'_{F}(x'')$ already implies (\ref{<0}). To this end,
replace $g_{iF}(x,y)$ by the arithmetic mean of the (finite) set
$A_{F,x} \cap (g_{iF}(x,y) + \cO_F)$ and change $e_{iF}$ {, using the additivity of $\psi$,} to make up for this modification.

Let $G'$ be a function in $\cCe(X')$ such that for all $F$,
\begin{equation}\label{H2}
G'_F = |H'_{F}|_\CC^2.
\end{equation}
Such $G'$ exists by multiplying (uniformly in $F$) $H'_F$ with its complex conjugate which is constructed by replacing the arguments {(appearing in $H'$)} of the additive character on the residue field by their additive inverses,  {similarly to} the proof of Lemma 4.5.9 of \cite{CGH}. Now define $ {\tilde H}$ such that
\begin{equation}\label{cH}
 {\tilde H}_{F}(x) = N'\cdot \sum_{x',\ \pi_F(x')=x } G'_F(x')
\end{equation}
for each $F$ and each $x\in X_F$,
and let $N$ be $N'{}^{2}$. We claim that $ {\tilde H}$ and $N$ are as desired. Firstly, $ {\tilde H}$ lies in $\cCe(X)$ by Theorem \ref{thm:mot.int.}.
From (\ref{basicf}), (\ref{H1}) and (\ref{H2})
it follows that
$$
|H_{F,\psi}(x)|_\CC^2 \leq | {\tilde H}_F(x)|_\CC \mbox{ for all $(\psi,x)$ in $\cD_F\times X_F$.}
$$
We now show that for each $x\in X_F$ there is $\psi_1$ in $\cD_F$ such that
\begin{equation}\label{H3}
\frac{1}{N}| {\tilde H}_F(x)|_\CC \leq | H_{F,\psi_1}(x)|_\CC^2.
\end{equation}
Fix $F$ and $x\in X_F$. From Corollay \ref{linear}, applied to a large enough finite subgroup $G$ of $F/\cO_F$ so that $G$ contains $g_F'(x')\bmod \cO_F$ for all $x'$ with $\pi(x')=x$,
one finds $\psi_1$ in $\cD_F$ such that
$$
 \sup_{x',\ \pi_F(x')=x} |H'_F(x')|_\CC \leq |H_{F,\psi_1}(x)|_{\CC}.
$$
Hence, from (\ref{basicf}) again,
$$
 \sum_{x',\ \pi_F(x')=x} |H'_F(x')|^2_\CC \leq N'|H_{F,\psi_1}(x)|^2_{\CC},
$$
and thus
$$
\frac{1}{N}  {\tilde H}_F(x) =  \frac{N'}{N}\sum_{x',\ \pi_F(x')=x} |H'_F(x')|^2_\CC \leq \frac{N'{}^2}{N} |H_{F,\psi_1}(x)|^2_{\CC} = |H_{F,\psi_1}(x)|^2_{\CC}.
$$
 {This shows (\ref{H3}).}
\end{proof}

We will also use the following generalization of Proposition B.8 of the appendix B of \cite{ShinTemp}.
Intuitively, it says that functions in $\cCe(S)$ (for arbitrary definable $S$)
only depend on value group and residue field information.

\begin{prop}\label{lem:presburger-fam}
Let $H$ be in $\cCe(S\times B)$ for some definable sets $S$ and $B$. Then there exist a definable function $f : S\times B \to \RF^m \times \ZZ^r\times B$ for some $m\geq 0$ and $r\geq 0$, which makes a commutative diagram with both projections to $B$, and a function $G$ in $\cCe(\RF^m \times \ZZ^r \times B)$ such that, for some $M$ and all $F$ in $\Loc_{\ri,M}$, the function $H_F$ equals the function $G_F\circ f_F$, and such that $G_F$ vanishes outside the range of $f_F$.
\end{prop}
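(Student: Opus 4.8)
The plan is to reduce to the case $B = \{\mathrm{pt}\}$ (i.e. to Proposition B.8 of \cite{ShinTemp} in spirit) by carrying the extra parameter $B$ through the argument, and to extract the needed ``value group plus residue field'' data by writing $H$ out explicitly in the form given by Definition \ref{expfun} for $\cCe$, then separating the $\VF$-variables from the rest via cell decomposition. Concretely, write $S \subset \VF^n \times \RF^{m_0} \times \ZZ^{r_0}$ and $B \subset \VF^{n'} \times \RF^{m_0'} \times \ZZ^{r_0'}$; since $H \in \cCe(S\times B)$, it is a sum of terms $\#Y_{i,F,s,b}\cdot q_F^{\alpha_{iF}}\cdot \prod_j \beta_{ijF} \cdot \prod_\ell (1-q_F^{a_{i\ell}})^{-1}$ times exponential-sum factors $\sum_{y\in Y_{i,F,s,b}} \mexp(\,\cdot\,)$ over residue-field tuples (the $g_i$ vanish identically, so only the $e_i\colon Y_i \to \RF$ survive, and these values feed into $\mexp\circ\tr$). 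The point is that every ingredient — the integer-valued definable functions $\alpha_{iF},\beta_{ijF}$, the cardinalities of the residue-field fibers $Y_{i,F}$, and the residue-field tuples over which the characters are summed — is determined by residue field and value group information once one knows which ``cell'' the valued-field coordinates of $(s,b)$ lie in.

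The key step is to invoke cell decomposition in the relative form (Theorem 7.2.1 of \cite{CLoes}, already used in the proof of Proposition \ref{from.exp.to.e}) to produce, over the definable set $S\times B$, a partition into finitely many cells, each cell being definably parametrized by residue field and value group coordinates together with the base $B$. Iterating over the $\VF$-coordinates of $S$ one at a time, one obtains a single definable function $f\colon S\times B \to \RF^m\times\ZZ^r\times B$ (commuting with the projections to $B$ by construction, since we never touch the $B$-coordinates except to keep them) such that each of the finitely many ``data functions'' above factors through $f$. On the image of $f_F$ one then defines $G_F$ by reading off the corresponding value of $H_F$: this is well-defined precisely because $f_F$ records enough data to recompute each summand of $H_F$, and one checks directly that the resulting $G = (G_F)_F$ lies in $\cCe(\RF^m\times\ZZ^r\times B)$ — the $q_F$-powers, the Presburger functions, the residue-field counting terms, and the character sums are all still of the allowed shape, now in the variables $(\RF^m,\ZZ^r,B)$. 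Finally, extend $G_F$ by zero outside the range of $f_F$; this keeps $G$ in $\cCe$ since the range of a definable function is a definable set and multiplication by its indicator is a motivic (indeed $\cCe$) operation.

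I expect the main obstacle to be the bookkeeping in the cell-decomposition step: one must carefully arrange that a \emph{single} $f$ simultaneously trivializes the $\VF$-dependence of all the finitely many pieces ($\alpha_{iF}$, $\beta_{ijF}$, $\#Y_{i,F,\cdot}$, and the summation sets for the characters), rather than getting a different parametrization for each piece. This is handled by taking a common refinement of the finitely many cell decompositions (possible since there are only finitely many definable ingredients), and by decomposing with respect to all the relevant definable conditions at once — so it is routine in principle but requires care. A secondary point is verifying that after the reparametrization the character-sum factors are still expressible purely in terms of $\RF^m$-coordinates, i.e. that the functions $e_{iF}$ descend; this follows because on each cell $e_{iF}$ is a definable function into $\RF$, hence (after adding coordinates to $m$ if necessary) can be taken to be one of the $\RF^m$-components of $f$. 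Once the decomposition is set up, defining $G$ and checking it is in $\cCe$ and vanishes off the range of $f_F$ is immediate.
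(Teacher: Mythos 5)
Your proposal is correct and takes essentially the same route as the paper: write $S\subset\VF^n\times\RF^a\times\ZZ^b$, reduce to one $\VF$-coordinate at a time, and apply the Cell Decomposition Theorem 7.2.1 of \cite{CLoes} to push the valued-field variables of every definable ingredient in the presentation of $H$ (the $\alpha_i$, $\beta_{ij}$, the residue-field fibers $Y_i$, and the maps $e_i$) into residue-field and value-group coordinates simultaneously, taking a common refinement so that a single $f$ works for all pieces. Your version spells out the bookkeeping that the paper leaves implicit, but the key lemma, the induction on $\VF$-coordinates, and the mechanism for producing $f$ and $G$ all coincide with the paper's argument.
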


\begin{proof}
The proof is similar to the one for Proposition B.8 in Appendix B of \cite{ShinTemp}.
Let us write $S\subset \VF^n \times \RF^a \times \ZZ^b$ for some integers $n,a$ and $b$. It is enough to prove the lemma when $n=1$ by a finite recursion argument.
The case $n=1$ follows from the Cell Decomposition Theorem 7.2.1 from \cite{CLoes}. Indeed, this result can be used to push the domains of all appearing definable functions in the  {build-up} of $H$ into a set of the form $\RF^m \times \ZZ^r$,
forcing them to have only residue field variables and value group variables.
\end{proof}

\begin{proof}[Proof of Theorem \ref{thm:transfer-gen}]
By Proposition \ref{from.exp.to.e} it is enough to consider the case that $H$ lies in $\cCe(X)$ and to show that one can take $N=1$ in this case.
Suppose that $X$ is a definable subset of
$\VF^n \times \RF^m \times \ZZ^r$. 
In the case that $n=0$, the proof goes as follows.
By quantifier elimination, any finite set of formulas needed
to describe $H$ and $G$ can be taken to be without valued field quantifiers. It follows that
\begin{equation}H_{F_1}=H_{F_2} \mbox{ and } G_{F_1}=G_{F_2}\end{equation}
for $ {F}_1$ and $F_2$ in $\Loc_{\ri, M}$ with $k_{F_1}\cong k_{F_2}$
and $M$ large enough, and up to identifying $k_{F_1}$ with $k_{F_2}$. This implies the case $n=0$ with $N=1$.

Now assume $n>0$. By
Proposition \ref{lem:presburger-fam}, there is a
definable function
\begin{equation}
f:X \to
\RF^{m'} \times \ZZ^{r'}
\end{equation}
for some $m'$, $r'$, and $\tilde H\in \cCe(\RF^{m'} \times \ZZ^{r'})$ and $\tilde G\in  {\cCe}(\RF^{m'} \times \ZZ^{r'})$, such that $H= {\tilde H \circ f}$ and  {$G=\tilde G\circ f$ and such that $\tilde H$ and $\tilde G$ vanish outside the range of $f$.} We finish the case of $H$ in $\cCe (X)$ by applying the case $n=0$ to $\tilde H$ and $\tilde G$.
\end{proof}

In order to prove Theorem \ref{thm:transfer-str}, we will need the corresponding strengthening of Proposition \ref{from.exp.to.e}, which goes as follows.

\begin{prop}\label{from.exp.to.e:str}
Let $H_i$ be in $\cCexp(X)$ for some definable set $X$ and for $i=1,\ldots,\ell$ for some $\ell>0$. Then there exist integers $M$ and $N$, {and functions} $ {\tilde H}_{i,s}$ in $\cCe(X)$ for $i,s=1,\ldots,\ell$, such that for all $F\in \Loc_{\ri, M}$ the following  {conditions} hold for all $x\in X_F$ and all $c=(c_i)_i$ in $\CC^\ell$.
\begin{enumerate}
\item
There is $\psi_1$ in $\cD_F$ (depending on $x$  {and $c$}) such that
$$
\frac{1}{N} | \sum_{i,s=1}^\ell c_i\bar c_s  {\tilde H}_{i,s,F}(x)|_\CC \leq | \sum_i c_i H_{i,F,\psi_1}(x)|_\CC^2.
$$
\item
For all $\psi$ in $\cD_F$, one has
$$
|\sum_i c_i H_{i,F,\psi}(x)|_\CC^2 \leq  | \sum_{i,s=1}^\ell c_i\bar c_s
 {\tilde H}_{i,s,F}(x)|_\CC.
$$
\end{enumerate}
\end{prop}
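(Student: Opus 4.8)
The plan is to run the construction from the proof of Proposition~\ref{from.exp.to.e} once and for all for the whole finite family $\{H_i\}_{i\le\ell}$, using a single common ``grid'' for the valued-field part of the oscillation, so that one additive character $\psi_1$ serves the entire linear combination $\sum_i c_iH_i$.

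First I would write each $H_i$ in the shape of \eqref{recall:fexp} and, for $x\in X_F$, form the union $A_{F,x}$ of the images of \emph{all} the inner sums over $y$ occurring in \emph{all} of the $H_i$. Exactly as in the proof of Proposition~\ref{from.exp.to.e}, $\#A_{F,x}$ is bounded by some $N'>0$ independently of $F$ and $x$, and cell decomposition (Theorem~7.2.1 of \cite{CLoes}) produces a definable set $X'\subset X\times\RF^t$ with projection $\pi\colon X'\to X$ and a definable function $g'\colon X'\to\VF$ inducing a bijection $X'_{F,x}\to A_{F,x}$ for all $F$ and $x$; moreover, after replacing each $g_{iF}(x,y)$ by the arithmetic mean of $A_{F,x}\cap(g_{iF}(x,y)+\cO_F)$ and correcting the corresponding $e_{iF}$, I may assume that distinct $x',x''\in X'_{F,x}$ satisfy \eqref{<0}, and so have distinct images in $F/\cO_F$. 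This rewriting gives, for each $i$, a function $H'_i\in\cCe(X')$ with
\[
H_{i,F,\psi}(x)=\sum_{x'\in X'_{F,x}}\psi\bigl(g'_F(x')\bigr)\,H'_{iF}(x').
\]

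Then I would put, for $i,s=1,\ldots,\ell$,
\[
\tilde H_{i,s,F}(x):=N'\cdot\sum_{x'\in X'_{F,x}}H'_{iF}(x')\,\overline{H'_{sF}(x')},\qquad N:=N'^{\,2}.
\]
Here $\overline{H'_{sF}}$ lies in $\cCe(X')$ since it is obtained from $H'_{sF}$ by negating the residue-field arguments of the additive character (as in the proof of Proposition~\ref{from.exp.to.e}, cf.\ Lemma~4.5.9 of \cite{CGH}), and the sum over the fiber $X'_{F,x}$ is a motivic integration over residue-field variables, so $\tilde H_{i,s}\in\cCe(X)$ by Theorem~\ref{thm:mot.int.}. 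Writing $K_F(x'):=\sum_{i=1}^\ell c_iH'_{iF}(x')$, one has $\sum_i c_iH_{i,F,\psi}(x)=\sum_{x'\in X'_{F,x}}\psi(g'_F(x'))K_F(x')$ and $\sum_{i,s}c_i\bar c_s\tilde H_{i,s,F}(x)=N'\sum_{x'\in X'_{F,x}}|K_F(x')|_\CC^2\ge 0$, so the modulus signs in both parts of the statement are harmless. The upper bound (part (2)) then follows from the triangle inequality, \eqref{basicf}, and $\#X'_{F,x}\le N'$:
\[
\Bigl|\sum_i c_iH_{i,F,\psi}(x)\Bigr|_\CC^2\le\Bigl(\sum_{x'\in X'_{F,x}}|K_F(x')|_\CC\Bigr)^2\le N'\sum_{x'\in X'_{F,x}}|K_F(x')|_\CC^2=\Bigl|\sum_{i,s}c_i\bar c_s\tilde H_{i,s,F}(x)\Bigr|_\CC.
\]
For the lower bound (part (1)), fix $F$ and $x\in X_F$ and apply Corollary~\ref{linear} to a finite subgroup $G$ of $F/\cO_F$ containing all the (pairwise distinct) residues $g'_F(x')\bmod\cO_F$ and to the function $\varphi\mapsto\sum_{x'}K_F(x')\varphi(g'_F(x'))$; just as in the proof of Proposition~\ref{from.exp.to.e} this produces $\psi_1\in\cD_F$, depending on $x$ and $c$, with $\sup_{x'}|K_F(x')|_\CC\le|\sum_i c_iH_{i,F,\psi_1}(x)|_\CC$. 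Hence $\sum_{x'}|K_F(x')|_\CC^2\le N'\,|\sum_i c_iH_{i,F,\psi_1}(x)|_\CC^2$, and multiplying through by $N'/N=1/N'$ yields exactly the inequality of part (1).

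The one place needing care---and the reason the proof of Proposition~\ref{from.exp.to.e} does not literally apply---is that the grid $(X',g')$ and the normalization \eqref{<0} must be set up simultaneously for all the $H_i$, so that the characters $\psi(g'_F(x'))$ are common to the whole family and the Fourier-analytic choice of $\psi_1$ via Corollary~\ref{linear} can be carried out for the single function $\varphi\mapsto\sum_{x'}K_F(x')\varphi(g'_F(x'))$, uniformly in $c$. Since this merely enlarges the index set of summands appearing in the construction of Proposition~\ref{from.exp.to.e}, no genuinely new difficulty arises; the main bookkeeping point is to check that all the resulting objects remain in $\cCe$ and are uniformly definable, which follows from Theorem~\ref{thm:mot.int.}.
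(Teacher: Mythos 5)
Your proposal is correct and follows essentially the same route as the paper: rewrite all the $H_i$ over a single common grid $(X',g')$ satisfying \eqref{<0}, set $\tilde H_{i,s}$ equal to $N'$ times the fiber-sum of $H'_i\overline{H'_s}$, and conclude with \eqref{basicf} and Corollary~\ref{linear}. The only (harmless) difference is that you build the common grid directly from the union of all the images $A_{F,x}$ over all $i$, whereas the paper constructs a grid for each $H_i$ separately and then passes to a disjoint union; your variant in fact makes the injectivity of $g'_F$ on fibers, needed for Corollary~\ref{linear}, slightly more transparent.
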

\begin{proof}
We start by applying the construction from the beginning of the proof of Proposition \ref{from.exp.to.e} to each of our functions
$H_{i,F,\psi}$, i.e., we write each of them in the form
\begin{equation}\label{str:H1}
H_{i,F,\psi}(x) = \sum_{x' \in X'_{F,x}}\psi(g'_{F}(x')) H'_{i,F}(x'),
\end{equation}
where $X'\subset X\times\RF^t$ has  {finite fibers $X'_{F,x}$ which are bounded uniformly in $x\in X_F$ and in $F$,}
$H'_i$ lies in $\cCe(X')$,  $g':X'\to \VF$ is definable, and such that
\begin{equation}\label{str:<0}
\ord(g'_{F}(x') - g'_{F}(x''))<0
\end{equation}
for any $x', x'' \in X'_{F,x}$ with $x' \ne x''$.

We can do this in such a way that neither $X'$ nor $g'$ depends on $i$. Indeed,
first do the construction for each $H_{i,F,\psi}$ separately, yielding sets $X'_i$ and functions $g'_i$.
Then let $X' := \bigdcup_i X'_i$ be the disjoint union, set $g'_F(x') := g'_{i,F}(x')$ if $x' \in X'_i$ and
extend $H'_{i,F}(x')$ from $X'_i$ to $X'$ by $0$.
Finally, note that the same construction as in the proof of Proposition \ref{from.exp.to.e} allows us to assume that
(\ref{str:<0}) holds on the whole of $X'$.

Let $G'_{i,s}$ be functions in $\cCe(X')$ such that
\begin{equation}\label{str:H2}
\sum_{i,s=1}^\ell c_i\bar c_s G'_{i,s,F}(x')   = | {\sum_{i=1}^\ell}  c_i H'_{i,F}(x')|_\CC^2.
\end{equation}
(for all $F$ and all $x'\in X'_F$).
Such $G'_{i,s}$ exist
 {by a similar argument to the one} explained for $G'$ in the proof of Proposition \ref{from.exp.to.e}.
Now use Theorem \ref{thm:mot.int.}  {for each $i$ to define
$ {\tilde H}_{i,s}$ in $\in \cCe(X)$ satisfying}
\begin{equation}\label{str:cH}
 {\tilde H}_{i,s,F}(x) = N'\cdot \sum_{x' \in X'_{F,x} } G'_{i,s,F}(x'),
\end{equation}
where $N' \in \NN$
is some constant which we will fix later.

We claim that for a suitable choice of $N'$, the  {functions} $ {\tilde H}_{i,s}$ are as desired.
Indeed, we have the following, where  {the relations} ``$\approx_1$'' and ``$\approx_2$''
are explained below.
\begin{align*}
\left|\sum_i c_i H_{i,F,\psi}(x)\right|_\CC^2
&\overset{( {\ref{str:H1}})}{=}
\left|\sum_{x'\in X'_{F,x}}\psi(g'_F(x'))\sum_{i} c_i H'_{i,F}(x')\right|_\CC^2
\\
&\approx_1
\left|\sum_{x'\in X'_{F,x}}\bigg|\sum_{i} c_i H'_{i,F}(x')\bigg|\right|_\CC^2
\\
&\approx_2
\sum_{x'\in X'_{F,x}}\bigg|\sum_{i} c_i H'_{i,F}(x')\bigg|_\CC^2
\\
&\overset{\substack{(\ref{str:H2})\\(\ref{str:cH})}}{=}
\frac1{N'} \bigg| \sum_{i,s=1}^\ell c_i\bar c_s  {\tilde H}_{i,s,I,F}(x)\bigg|_\CC
\end{align*}
The two sides of ``$\approx_2$'' differ at most by a constant, by the  {simple fact (\ref{basicf}) and since the sets $X'_{F,x}$ are finite sets which are bounded uniformly in $x\in X_F$ and $F$.}
At ``$\approx_1$'', we have ``$\le$'', which already implies (2) of the proposition for a suitable choice of $N'$, and
we obtain an estimate in the other direction in the same way as
in the proof of Proposition \ref{from.exp.to.e}: By Corollary~\ref{linear}, and using
(\ref{str:<0}), for each $F$ and each $x$, there exists a $\psi_1 \in \cD_F$ such that
\[
\left|\sum_{x'\in X'_{F,x}}\psi_1(g'_F(x'))\sum_{i} c_i H'_{i,F}(x')\right|_{\CC}
\ge
\sup_{x'\in X'_{F,x}}\bigg|\sum_{i} c_i H'_{i,F}(x')\bigg|
;
\]
now use once more that the cardinality of $X'_{F,x}$ is  {uniformly} bounded to replace the supremum over $x'$ by the sum,  {and to obtain (1) of the Proposition}.
\end{proof}

\begin{proof}[Proof of Theorem \ref{thm:transfer-str}]
By Proposition \ref{from.exp.to.e:str} it is enough to consider the case that the $H_i$ lie in $\cCe(X)$ and to show that one can take $N=1$ in this case. But this case is proved as the proof for the corresponding case of Theorem \ref{thm:transfer-gen}.
\end{proof}

\medskip

It remains to prove Proposition~\ref{cor:transfer:indep:basic}.  {We do this by reducing to the transfer principle of \cite[Proposition 9.2.1]{CLexp}}. The main ingredient  {for this reduction} is the following  {classical result}, which
shows that  a finite collection of
functions being linearly dependent is equivalent to some other function  {that can be constructed from this collection}  {being constantly zero}.

\begin{lem}\label{lem:indep}
Let $f_i$ be complex-valued functions on some set $A$ for $i=1,\ldots,n$. Then there exists nonzero $c=(c_i)_{i=1}^n$ in $\CC^n$ such that {the function}
$
\sum_{i=1}^n c_i f_i
$
is identically vanishing on $A$ if and only if the determinant of  {the matrix}
$$
(f_i(z_j))_{i,j}
$$
is identically vanishing on $A^n$,
where the $z_j$ are distinct variables, running over $A$ for $j=1,\ldots,n$.
\end{lem}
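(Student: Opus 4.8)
The plan is to prove Lemma~\ref{lem:indep} by a standard linear-algebra argument relating linear dependence of a tuple of functions to the vanishing of a ``Wronskian-type'' determinant, here built by evaluation at $n$ independent points rather than by differentiation.

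\medskip\noindent\emph{The easy direction.} Suppose there is a nonzero $c=(c_i)_i\in\CC^n$ with $\sum_i c_i f_i\equiv 0$ on $A$. Fix any $z_1,\ldots,z_n\in A$ and consider the matrix $\bigl(f_i(z_j)\bigr)_{i,j}$. For each $j$, the vector $\bigl(f_i(z_j)\bigr)_{i=1}^n$ satisfies $\sum_i c_i f_i(z_j)=0$, so the columns (indexed by $j$, with entries indexed by $i$) — or rows, depending on the convention — all lie in the hyperplane $c^\perp$; more precisely, the rows of the matrix, viewed as the vectors $\bigl(f_i(z_1),\ldots,f_i(z_n)\bigr)$ for $i=1,\ldots,n$, are linearly dependent with coefficients $c_i$. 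Hence the determinant vanishes, for every choice of $(z_1,\ldots,z_n)\in A^n$, which is exactly the assertion that the determinant is identically vanishing on $A^n$.

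\medskip\noindent\emph{The converse direction.} Suppose the $f_i$ are linearly independent as functions on $A$; we must exhibit $(z_1,\ldots,z_n)\in A^n$ with $\det\bigl(f_i(z_j)\bigr)_{i,j}\neq 0$. Proceed by induction on $n$. For $n=1$: if $f_1$ is linearly independent (i.e.\ not identically zero) there is $z_1$ with $f_1(z_1)\neq 0$. For the inductive step, note that $f_1,\ldots,f_{n-1}$ are linearly independent, so by induction there are $z_1,\ldots,z_{n-1}\in A$ with the $(n-1)\times(n-1)$ minor $d:=\det\bigl(f_i(z_j)\bigr)_{1\le i,j\le n-1}\neq 0$. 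Now expand $\det\bigl(f_i(z_j)\bigr)_{1\le i,j\le n}$ along the last column: as a function of $z_n$ it equals $\sum_{i=1}^n (-1)^{i+n} M_i\, f_i(z_n)$, where $M_i$ is the minor obtained by deleting row $i$ and the last column; these minors do not depend on $z_n$, and $M_n=d\neq 0$. If this function of $z_n$ vanished for all $z_n\in A$, we would have a nontrivial linear relation $\sum_{i=1}^n (-1)^{i+n}M_i f_i\equiv 0$ on $A$ (nontrivial since the coefficient of $f_n$ is $\pm d\neq 0$), contradicting the linear independence of $f_1,\ldots,f_n$. Hence there is $z_n\in A$ with $\det\bigl(f_i(z_j)\bigr)_{i,j}\neq 0$, completing the induction. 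Taking the contrapositive of this direction gives: if the determinant is identically zero on $A^n$, then the $f_i$ are linearly dependent, i.e.\ there is a nonzero $c$ with $\sum_i c_i f_i\equiv 0$.

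\medskip\noindent\emph{Main obstacle.} There is no serious obstacle; this is a classical fact. The only point requiring a little care is bookkeeping in the inductive step — keeping straight that the cofactors $M_i$ are independent of the running variable $z_n$, and that $M_n$ is (up to sign) the nonzero minor $d$ produced by the induction hypothesis, so that the resulting linear relation among the $f_i$ is genuinely nontrivial. One should also note that nothing about $A$ is used beyond its being a set: the statement is purely about complex-valued functions, which is exactly the generality in which it will be applied to the families $H_{i,F,\psi}(\cdot,y)$.
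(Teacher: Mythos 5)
Your proof is correct. Both directions are handled properly: the easy direction exhibits the rows as linearly dependent with coefficients $c_i$, and the converse, run as a contrapositive by induction on $n$ with a cofactor expansion along the $z_n$-column, is sound — in particular you correctly observe that the minors $M_i$ do not depend on $z_n$ and that $M_n$ is the nonzero $(n-1)\times(n-1)$ determinant supplied by the induction hypothesis, so the putative relation $\sum_i(-1)^{i+n}M_i f_i\equiv 0$ really is nontrivial.

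Your route differs from the paper's. The paper argues the nontrivial direction directly, without induction: it picks a maximal collection of points $z_1,\ldots,z_r$ for which the rows $(f_1(z_j),\ldots,f_n(z_j))$ are linearly independent, notes that identical vanishing of the $n\times n$ determinant forces $r<n$, extracts a nontrivial column relation $a_1,\ldots,a_n$ from the resulting $r\times n$ matrix of rank $r<n$, and then uses maximality of $r$ to propagate this relation from $z_1,\ldots,z_r$ to every $z\in A$. Your inductive argument replaces the ``maximal independent family plus rank count'' step with a cofactor expansion that manufactures the relation in one stroke; it is a bit more computational but arguably more self-contained. The paper's version, by contrast, makes the linear-algebra mechanism (a rank deficiency of the evaluation matrix) more transparent and avoids having to track cofactor signs. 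Both are elementary and of comparable length, so the choice is largely a matter of taste.
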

\begin{proof}
The implication ``$\Rightarrow$'' is easy, so let us assume that the given determinant is identically vanishing on $A^n$.
Choose as many points $z_1,\ldots, z_r$ in $A$ as possible such that the rows
$$
(f_1(z_1), \dots, f_n(z_1))
$$
$$
 \vdots
$$
$$
(f_1(z_r), \dots, f_n(z_r))
$$
are linearly independent. By the assumption on the determinant $D$, we have $r < n$, hence there
exists a linear dependence between the columns, i.e., there are  {complex} numbers $a_1,\ldots,
a_n$,  {not all zero}, such that
\begin{equation}\label{eq:aifi}
 a_1 f_1(z_j) + \dots + a_n f_n(z_j) = 0
\end{equation}
for every $j \leq  r$.

Now we claim that this implies
\begin{equation}\label{eq:aiHi}
\sum a_i f_i = 0 \mbox{ on $A$},
\end{equation}
with $a_i$ as in (\ref{eq:aifi}).
To
verify this, choose any other point $z$ in $A$. By the choice of $z_1, \ldots, z_r$, the row
$$
(f_1(z), \dots, f_n(z))
$$
can be written as a linear combination of the rows
$$
(f_1(z_j), \dots, f_n(z_j)).
$$
This implies that (\ref{eq:aifi}) also holds for
$$
(f_1(z), \dots, f_n(z)),
$$
but this implies (\ref{eq:aiHi}).
\end{proof}

\begin{proof}[Proof of Proposition \ref{cor:transfer:indep:basic}]

(1) Consider the function $D$ in $\cCexp(X^\ell {\times } Y)$ given by
\[
D_{F,\psi}(x_1, \dots, x_\ell, y) = \det((H_{i,F,\psi}(x_j, y))_{ij})
.
\]
For each $F$, $\psi$ and $y$,
by Lemma \ref{lem:indep}, $D_{F,\psi}(\cdot, y)$ is identically zero on $X_F^\ell$ iff the $H_{i,F,\psi}(\cdot, y)$ for $i=1,\ldots,\ell$ are linearly dependent. Thus the statement we want to transfer is that $D_{F,\psi}$ is identically zero on $X_F^\ell \times Y_F$
for all $\psi$. This follows from \cite[Proposition 9.2.1]{CLexp} (which is the case of Theorem \ref{thm:transfer-gen} with $G=0$).

(2) Set $W := X^\ell$ and define  {$D$ in $\cCexp(W\times Y)$ as in (1).}

Consider $F$, $\psi$, $w = (x_1, \dots, x_\ell)$, $y$ such that $d := D_{F,\psi}(w, y) \ne 0$.
Then there exist  {unique} $c_1,\dots, c_\ell \in \CC$ such that
\begin{equation}\label{coeff:xj}
G_{F,\psi}(x_j, y) = \sum_i c_i H_{i,F,\psi}(x_j, y) \qquad\text{for } 1 \le j \le \ell
.
\end{equation}
By Cramer's rule, the products $c_i \cdot d$ are polynomials in $G_{F,\psi}(x_j, y)$ and $H_{i,F,\psi}(x_j, y)$,
so there exist functions $C_{i}$ in $\cCexp(W \times Y)$ such that
$c_i = C_{i,F,\psi}(w, y)/D_{F,\psi}(w, y)$. These $C_i$ (and this $D$) are as required:
As noted in the proof of (1), if $F$, $\psi$ and $y$ are such that the $H_{i,F,\psi}(\cdot, y)$ are linearly independent,
then there exists a $w \in W_F$ such that $D_{F,\psi}(w, y) \ne 0$, and if $G_{F,\psi}(\cdot, y)$ is a linear combintation of the
$H_{i,F,\psi}(\cdot, y)$, then for such a $w$, the coefficients $c_i$ from (\ref{coeff:xj}) are the desired ones.
\end{proof}

\bibliographystyle{amsplain}
\bibliography{tibib}
\end{document}